\newtheorem{theorem}{Theorem}[section]
\newtheorem{lemma}[theorem]{Lemma}
\newtheorem{proposition}{Proposition}
\theoremstyle{definition}
\newtheorem{definition}[theorem]{Definition}
\newtheorem{remark}{Remark}
\newtheorem*{question}{Question}
\renewcommand{\epsilon}{\varepsilon}
\renewcommand{\phi}{\varphi}
\renewcommand{\theta}{\vartheta}
\title[Parabolic Arcs of the Multicorns]{Parabolic arcs of the multicorns: real-analyticity of Hausdorff dimension, and singularities of $\mathrm{Per}_n(1)$ Curves}
\author{Sabyasachi Mukherjee}
\subjclass{Primary: 37F10, 37F30, 37F35, 37F45}
\keywords{Hausdorff Dimension, Parabolic Curves, Antiholomorphic Dynamics, Quasiconformal Deformation, Multicorns}
\email{sabya@math.stonybrook.edu, s.mukherjee@jacobs-university.de}
\thanks{The author was supported by Deutsche Forschungsgemeinschaft DFG}
\begin{document}
\maketitle

\medskip
{\footnotesize

 \centerline{Jacobs University Bremen}
   \centerline{Campus Ring 1}
   \centerline{Bremen 28759, Germany}
}

\medskip
{\footnotesize

 \centerline{Institute for Mathematical Sciences}
   \centerline{Stony Brook University} 
   \centerline{Stony Brook, 11794, NY, USA}
}

\date{\today}

\begin{abstract}
The boundaries of the hyperbolic components of odd period of the multicorns contain real-analytic arcs consisting of quasi-conformally conjugate parabolic parameters. One of the main results of this paper asserts that the Hausdorff dimension of the Julia sets is a real-analytic function of the parameter along these parabolic arcs. This is achieved by constructing a complex one-dimensional quasiconformal deformation space of the parabolic arcs which are contained in the dynamically defined algebraic curves $\mathrm{Per}_n(1)$ of a suitably complexified family of polynomials. As another application of this deformation step, we show that the dynamically natural parametrization of the parabolic arcs has a non-vanishing derivative at all but (possibly) finitely many points. 

We also look at the algebraic sets $\mathrm{Per}_n(1)$ in various families of polynomials, the nature of their singularities, and the `dynamical' behavior of these singular parameters.
\end{abstract}

\section{Introduction}\label{intro}

The multicorns are the connectedness loci of unicritical antiholomorphic polynomials. Any unicritical antiholomorphic polynomial, up to an affine change of coordinates, can be written in the form $f_c(z) = \overline{z}^d + c$, for some $c \in \mathbb{C}$ and $d \geq 2$. In analogy to the holomorphic case, the set of all points which remain bounded under all iterations of $f_c$ is called the \emph{filled-in Julia set} $K(f_c)$. The boundary of the filled-in Julia set is defined to be the \emph{Julia set} $J(f_c)$ and the complement of the Julia set is defined to be its \emph{Fatou set} $F(f_c)$. This leads, as in the holomorphic case, to the notion of \emph{connectedness locus} of degree $d$ unicritical antiholomorphic polynomials:

\begin{definition}[Multicorns]
The \emph{multicorn} of degree $d$ is defined as $\mathcal{M}^{\ast}_d := \lbrace c \in \mathbb{C} : K(f_c)$ is connected$\rbrace$. The multicorn of degree $2$ is called the tricorn.
\end{definition}

It follows from classical works of Bowen and Ruelle \cite{Ru,Zi} that the Hausdorff dimension of the Julia set depends real-analytically on the parameter within every hyperbolic component of $\mathcal{M}_d^*$. Ruelle's proof makes essential use of the fact that hyperbolic rational maps are expanding (this allows one to use the powerful machinery of thermodynamic formalism), and the Julia sets of hyperbolic rational maps move holomorphically inside every hyperbolic component.

The boundary of every hyperbolic component of \emph{odd} period of $\mathcal{M}_d^*$ is a simple closed curve consisting of exactly $d+1$ double parabolic parameters (also called `cusp points') as well as $d+1$ parabolic arcs, each connecting two double parabolics. Moreover, any two parameters on a given parabolic arc have quasiconformally conjugate dynamics \cite{MNS}. Since parabolic maps have a certain weak expansion property, and since there are real-analytic arcs of quasiconformally conjugate parabolic parameters on the boundary of every hyperbolic component of odd period of $\mathcal{M}_d^*$, it is natural to ask whether the Hausdorff dimension of the Julia set depends real-analytically on the parameter along these parabolic arcs. 

However, an apparent obstruction to proving real-analyticity of Hausdorff dimension of the Julia set along the parabolic arcs is that the parabolic arcs are real one-dimensional curves, and hence one cannot find a holomorphic motion of the Julia sets (of the parabolic parameters) within the family of unicritical antiholomorphic polynomials. We circumvent this problem by constructing a strictly larger quasiconformal deformation class of the odd period simple parabolic parameters (also called `non-cusp' parabolic parameters) of the multicorns so that the deformation is no longer contained in the family of unicritical antiholomorphic polynomials, but lives in a bigger family of holomorphic polynomials. This helps us to embed the parabolic arcs (which are real one-dimensional curves) in a complex one-dimensional family of quasiconformally conjugate parabolic maps. This proves the existence of holomorphic motion of the Julia sets under consideration (better yet, this proves structural stability of the odd period non-cusp parabolic maps along a suitable algebraic curve). This is performed in Section \ref{non-singular} by varying the critical Ecalle height over a bi-infinite strip by a quasiconformal deformation argument.

Having the complexification of the parabolic arcs (i.e. complex analytic parameter dependence of the persistently parabolic maps) at our disposal, we can apply results on real-analyticity of Hausdorff dimension of Julia sets of analytic families of meromorphic functions, as developed in \cite{SU}, to our setting. The following theorem, which is proved in Section \ref{real-anal}, can be naturally thought of as a version of Ruelle's theorem on the boundaries of hyperbolic components:

\begin{theorem}[Real-analyticity of HD Along Parabolic Arcs]\label{real-anal HD}
Let $\mathcal{C}$ be a parabolic arc of $\mathcal{M}_d^*$ and let $c: \mathbb{R} \rightarrow \mathcal{C}, h \mapsto c(h)$ be its critical Ecalle height parametrization. Then the function
\begin{align*}
\mathbb{R} \ni h \mapsto \mathrm{HD}(J(f_{c(h)}))
\end{align*}
is real-analytic.
\end{theorem}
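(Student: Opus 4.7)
The plan is to reduce the theorem to the Stratmann--Urbanski real-analyticity-of-dimension theorem \cite{SU} for analytic families of maps with parabolic periodic points, the crucial ingredient being supplied by the deformation of Section~\ref{non-singular}. That deformation produces a complex one-dimensional holomorphic family $\{g_\lambda\}_{\lambda \in U}$ of polynomials, with $U \subset \mathbb{C}$ open and containing (the image of) the arc $\mathcal{C}$, such that each $g_\lambda$ lies on $\mathrm{Per}_n(1)$, each is quasiconformally conjugate to the corresponding $f_c$ on the arc, and the parabolic fixed point together with its multiplier $1$ is preserved throughout. First I would invoke this construction to pass from the real-analytic arc $\mathcal{C}$ to a genuine complex-analytic parameter space, and record that the QC conjugacy provides a holomorphic motion of the Julia sets $J(g_\lambda)$ over $U$.

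Next I would verify the hypotheses of \cite{SU} for the family $(g_\lambda)_{\lambda \in U}$: the maps depend complex-analytically on $\lambda$; the parabolic cycle persists holomorphically with constant multiplier $1$ (so the parabolic point itself, not just its location in the Julia set, moves holomorphically); the Julia sets move holomorphically; and the free critical orbits are all captured by the basin of the persistent parabolic cycle, so that this cycle is the unique non-hyperbolic obstruction on $J(g_\lambda)$. With these in hand, the thermodynamic-formalism machinery of \cite{SU} shows that the topological pressure $\lambda \mapsto P(-t \log |g_\lambda'|)$ is complex-analytic in $\lambda$ and strictly decreasing and real-analytic in $t$ on the relevant domain, and by the parabolic Bowen formula its unique zero $t(\lambda)$ equals $\mathrm{HD}(J(g_\lambda))$.

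Finally, composing the complex-analytic map $\lambda \mapsto t(\lambda)$ with the critical Ecalle height parametrization $h \mapsto c(h) \in \mathcal{C} \subset U$, which is itself real-analytic by the explicit construction of Section~\ref{non-singular}, gives the desired conclusion that $h \mapsto \mathrm{HD}(J(f_{c(h)}))$ is real-analytic on $\mathbb{R}$.

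The main obstacle I anticipate is the verification step, namely showing that the deformation produced in Section~\ref{non-singular} really does fit into the \emph{analytic family} framework of \cite{SU} in the strong sense, as opposed to merely giving a continuous deformation of parabolic maps. The delicate point is the rigid persistence of the parabolic multiplier (not just the parabolic point as a set), which is exactly what the algebraic constraint of lying on $\mathrm{Per}_n(1)$ guarantees, together with the fact that this curve is smooth at the relevant parameters (a non-singular condition that itself needs the analysis of Section~\ref{non-singular}). A secondary technical issue is to check that the natural critical Ecalle height coordinate $h$ identifies real-analytically with the restriction of the complex parameter $\lambda$ to the real arc, which follows from the very definition of the deformation via quasiconformal surgery on the Ecalle cylinder.
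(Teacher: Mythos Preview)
Your approach is essentially the paper's: invoke the complex deformation of Lemma~\ref{qcdef} to obtain a holomorphic family with a holomorphic motion of Julia sets, then apply the Skorulski--Urba\'nski theorem (Theorem~\ref{motor}), and restrict to the real line.

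Two clarifications. First, your anticipated obstacle concerning non-singularity of $\mathrm{Per}_n(1)$ is a red herring for \emph{this} theorem. The parameter space for the analytic family is the strip $S$ itself, not the curve $\mathrm{Per}_n(1)$: Lemma~\ref{qcdef} produces $w\mapsto P_{a(w),b(w)}$ holomorphic in $w\in S$ directly from the Measurable Riemann Mapping Theorem with parameters, and the q.c.\ conjugacies $\phi_w$ furnish the holomorphic motion. Whether $F(S)$ sits smoothly inside $\mathrm{Per}_n(1)$ is irrelevant here; smoothness is only needed for Theorem~\ref{almost non-singular}. Second, the output of Theorem~\ref{motor} is real-analyticity of $w\mapsto \mathrm{HD}(J_r(P_{a(w),b(w)}))$, the \emph{radial} Julia set dimension (and the pressure is real-valued, so ``complex-analytic in $\lambda$'' is a slip); you need the parabolic identity $\mathrm{HD}(J_r)=\mathrm{HD}(J)$ from Lemma~\ref{radial parabolic} to finish.
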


\begin{figure}[ht!]
\begin{center}
\includegraphics[scale=0.2]{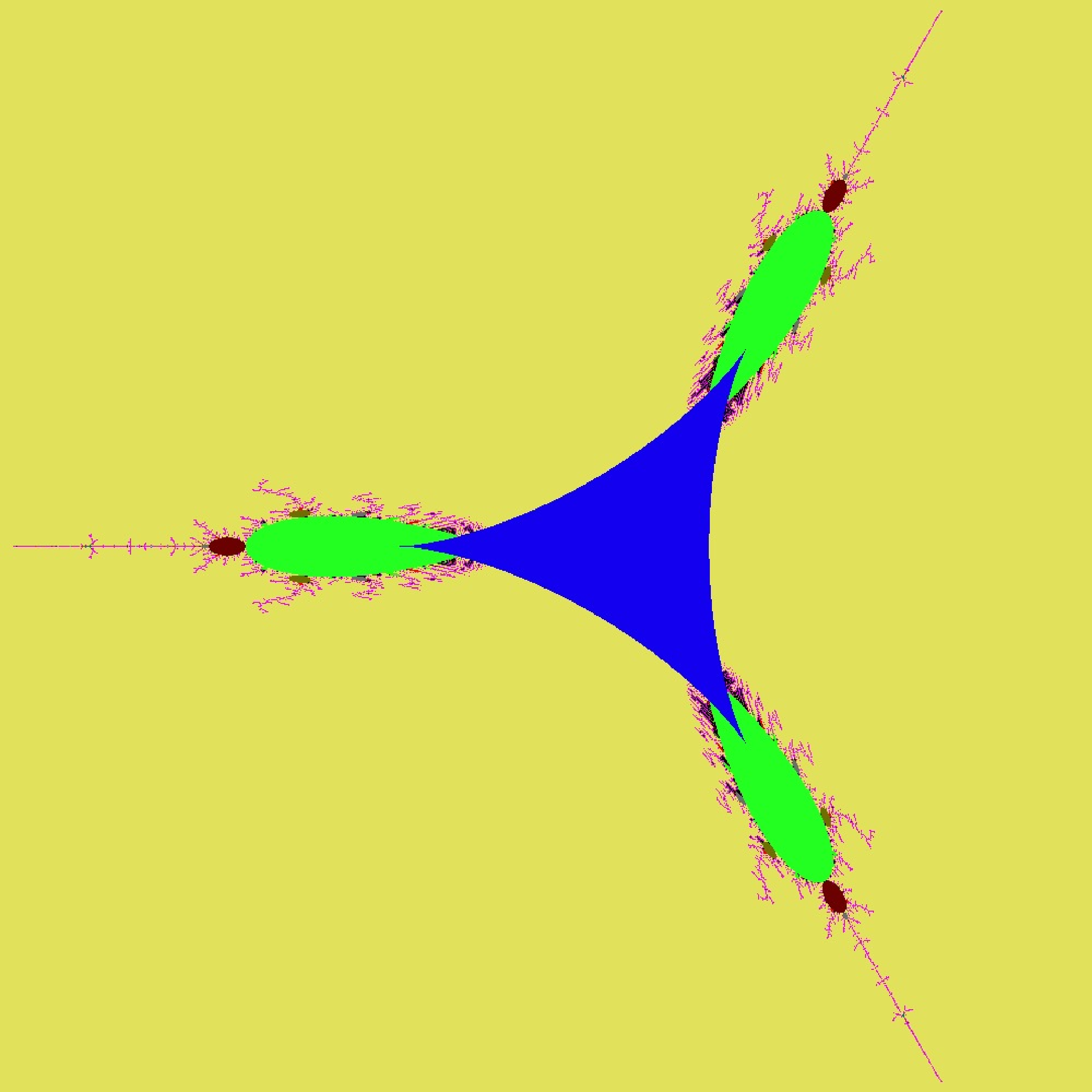}
\caption{$\mathcal{M}_2^*$, also known as the tricorn and the parabolic arcs on the boundary of the hyperbolic component of period $1$ (in blue)}
\label{tricorn}
\end{center}
\end{figure}

It will transpire from the course of the proof that in good situations, the real-analyticity of Hausdorff dimension holds more generally on certain regions of the parabolic curves $\mathrm{Per}_n(1)$ (see \cite{M3} for the definition of the $\mathrm{Per}$ curves) of various other families of polynomials.

As a by-product of the quasiconformal deformation step, we prove that the Ecalle height parametrization of the parabolic arcs of the multicorns is non-singular at all but possibly finitely many points.

\begin{theorem}\label{almost non-singular}
Let $\mathcal{C}$ be a parabolic arc of odd period of $\mathcal{M}_d^*$ and $c : \mathbb{R} \rightarrow \mathcal{C}$ be its critical Ecalle height parametrization. Then, there exists a holomorphic map $\phi : S=\lbrace w = u + iv \in \mathbb{C} : \vert v \vert < \frac{1}{4} \rbrace \rightarrow \mathbb{C}$ such that:
\begin{enumerate}
\item The map $\phi$ agrees with the map $c$ on $\mathbb{R}$.

\item  For all but possibly finitely many $x \in \mathbb{R}$, $c'(x) = \phi'(x) \neq 0$.
\end{enumerate} 
In particular, the critical Ecalle height parametrization of any parabolic arc of $\mathcal{M}_d^*$ has a non-vanishing derivative at all but possibly finitely many points. 
\end{theorem}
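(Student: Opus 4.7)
The starting point is the quasiconformal deformation developed in Section~\ref{non-singular}. Given a non-cusp parabolic parameter $c_0 \in \mathcal{C}$, one constructs for each $w = u + iv \in S$ a Beltrami coefficient on the dynamical plane of $f_{c_0}$ by pulling back the flat conformal structure of the incoming Ecalle cylinder of the critical orbit via a translation by $w$. Integrating this family through the measurable Riemann mapping theorem with holomorphic parameter dependence produces a holomorphic map $\phi: S \to \mathbb{C}$ whose image lies in $\mathrm{Per}_n(1)$. By the design of the translation, the real slice $\{v=0\}$ reproduces the critical Ecalle height parametrization, so $\phi|_{\mathbb{R}} = c$, and in particular $c'(x) = \phi'(x)$ for every $x \in \mathbb{R}$; this establishes~(1).

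For~(2), first note that $\phi$ is non-constant, since its restriction to $\mathbb{R}$ is the continuous bijection $c$ onto the arc $\mathcal{C}$. Hence $\phi'$ is a non-zero holomorphic function on $S$, and the set $Z := \{x \in \mathbb{R} : \phi'(x) = 0\}$ is already discrete in $\mathbb{R}$. The remaining task is to upgrade discreteness to finiteness, and this is where the algebraicity of $\mathrm{Per}_n(1)$ enters. Let $\Gamma$ be the irreducible component of $\mathrm{Per}_n(1)$ containing $\phi(S)$, let $\widetilde{\Gamma}$ be its normalization, and let $\pi: \widetilde{\Gamma} \to \Gamma \subset \mathbb{C}$ be the composition of normalization and inclusion. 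The map $\phi$ lifts uniquely to a holomorphic $\widetilde{\phi}: S \to \widetilde{\Gamma}$ with $\phi = \pi \circ \widetilde{\phi}$. The structural stability statement proved in Section~\ref{non-singular}, namely that distinct complex Ecalle heights produce qc-conjugate but non-identical polynomial parameters, implies that $\phi$, and therefore $\widetilde{\phi}$, is injective on $S$. An injective holomorphic map between Riemann surfaces is a biholomorphism onto its open image, so $\widetilde{\phi}'$ never vanishes on $S$. Since
\[
\phi'(x) = \pi'\bigl(\widetilde{\phi}(x)\bigr) \cdot \widetilde{\phi}'(x),
\]
we obtain $Z = \widetilde{\phi}^{-1}\bigl(\mathrm{Crit}(\pi)\bigr) \cap \mathbb{R}$. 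Because $\pi$ is an algebraic map of finite degree between smooth curves, $\mathrm{Crit}(\pi)$ is finite; combined with the injectivity of $\widetilde{\phi}$, this bounds $|Z|$ by $|\mathrm{Crit}(\pi)|$, proving~(2).

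The main obstacle in this plan is the injectivity (equivalently, local biholomorphicity) of $\widetilde{\phi}$. Injectivity of $c$ on $\mathbb{R}$ is not sufficient in itself, because a non-injective holomorphic map can restrict to an injection on a real slice (as in $w \mapsto w^3$). What is really needed is the complex statement: if two heights $w_1 \neq w_2$ in $S$ produced the same polynomial, the resulting qc self-conjugacy of $f_{\phi(w_1)}$ would have to shift the Ecalle height by $w_2 - w_1 \neq 0$, contradicting the uniqueness of the Fatou coordinate up to the real additive normalization built into the parametrization. Granting this rigidity, which is essentially the content of the deformation argument of Section~\ref{non-singular}, the finiteness of $Z$ reduces to the algebraic-geometric fact that a finite-degree projection from a smooth algebraic curve to $\mathbb{C}$ has only finitely many critical points.
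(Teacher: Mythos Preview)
Your argument is correct in substance and reaches the same conclusion as the paper, but by a genuinely different route. The paper's proof is more elementary: from Lemma~\ref{qcdef} one has the injective holomorphic map $F:S\to P_k\subset\mathbb{C}^2$, $w\mapsto(a(w),b(w))$, and one sets $\phi:=b$. The paper then simply observes that an affine plane curve has only finitely many singular points; at any non-singular point $(\overline{c(\tilde u)},c(\tilde u))$ of $P_k$ the implicit function theorem makes one of the coordinate projections $\pi_1,\pi_2$ a local biholomorphism, so the composition with the injective $F$ forces $a'(\tilde u)\neq0$ or $b'(\tilde u)\neq0$, and either gives $c'(\tilde u)\neq0$ since $a|_{\mathbb{R}}=\overline{c}$ and $b|_{\mathbb{R}}=c$. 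You instead pass to the normalization $\widetilde{\Gamma}$, lift to an injective $\widetilde\phi$ (hence $\widetilde\phi{}'\neq0$ everywhere), and reduce the zeros of $\phi'$ to preimages of the finitely many critical points of an algebraic projection $\widetilde{\Gamma}\to\mathbb{C}$. This trades the implicit function theorem for the universal property of normalization and repackages ``finitely many bad points'' as ramification of a finite morphism from a smooth curve rather than singularities of a plane curve; both bookkeeping devices yield the same finiteness.

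One point does need correcting. You write that $\phi(S)$ lies in $\mathrm{Per}_n(1)$ and that $\Gamma\subset\mathbb{C}$, but $\mathrm{Per}_n(1)$ sits in $\mathbb{C}^2$, not in $\mathbb{C}$. The object that lands in $\Gamma\subset\mathbb{C}^2$ and can be lifted through the normalization $\nu:\widetilde{\Gamma}\to\Gamma$ is the full map $F=(a,b)$ of Lemma~\ref{qcdef}, not the scalar map $\phi=b$: a holomorphic map $S\to\mathbb{C}$ carries no branch information and does not factor through $\widetilde{\Gamma}$ on its own. Once you lift $F$ (which is legitimate since $S$ is smooth) to an injective $\widetilde F:S\to\widetilde{\Gamma}$ and set $\pi:=\pi_2\circ\nu:\widetilde{\Gamma}\to\mathbb{C}$, your chain rule $\phi'=\pi'(\widetilde F)\cdot\widetilde F{}'$ and the finiteness of $\mathrm{Crit}(\pi)$ go through exactly as you wrote.
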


In Section \ref{per_1_1}, we look at the curves $\mathrm{Per}_n(1)$ in biquadratic polynomials, and prove that each cusp point (odd period double parabolic parameter) is a singular point (with at least a double tangent) of $\mathrm{Per}_n(1)$. In Section \ref{sing_per_n_1}, we look at some more examples of the algebraic sets $\mathrm{Per}_n(1)$ (in various families of polynomials), and try to understand the nature of their singularities as well the `dynamical' behavior of these singular parameters. We do not prove any precise theorem here, we rather investigate some concrete examples, and give heuristic explanations of singularity/non-singularity, with a view towards a more general understanding of the topology of these algebraic sets.

This paper is a continuation of the author's investigation of the parameter spaces of unicritical antiholomorphic polynomials \cite{Sa,MNS,IM}. These parameter spaces have played an important role in the recent work by various people \cite{BBM1,BBM2,CFG,BeEr}. Therefore, notwithstanding the fact that the results of this paper are likely to hold in more general settings, it is worthwhile to record the ideas in the concrete case of unicritical antiholomorphic polynomials. It is worth mentioning that the present paper adds one more item to the list of topological differences between the multicorns and their holomorphic counterparts, the multibrot sets (these are the connectedness loci of unicritical holomorphic polynomials $z^d+c$). Clearly, the parameter dependence of the Hausdorff dimension of the Julia sets is far from regular on the boundary of the Mandelbrot set. It has been recently proved \cite{HS} that the multicorns are not locally connected, while the local connectivity of the Mandelbrot set is one of of the most prominent conjectures in one dimensional complex dynamics. In a recent work \cite{IM}, we proved that rational parameter rays at odd-periodic angles of the multicorns do not land, rather they accumulate on arcs of positive length in the parameter space. This is in stark contrast with the fact that every rational parameter ray of the multibrot sets lands at a unique parameter. In \cite{IM}, we also showed that the centers of the hyperbolic components, and the Misiurewicz parameters do not accumulate on the entire boundary of the multicorns. More such topological differences between the multibrot sets and the multicorns, including bifurcation along arcs, existence of real-analytic arcs of quasi-conformally equivalent parabolic parameters, discontinuity of landing points of dynamical rays, etc. can be found in \cite{MNS}.

\section{Antiholomorphic Fatou Coordinates, Equators and Ecalle heights}\label{Secbackground}

In this section, we recall some basic facts about the parameter spaces of unicritical antiholomorphic polynomials. One of the major differences between the multicorns and the multibrot sets (which are the connectedness loci of unicritical holomorphic polynomials $z^d+c$) is that the boundaries of odd period hyperbolic components of the multicorns consist only of parabolic parameters.

\begin{lemma}[Indifferent Dynamics of Odd Period]\label{LemOddIndiffDyn}  
The boundary of every hyperbolic component of odd period $k$ of $\mathcal{M}_d^*$ consists 
entirely of parameters having a parabolic cycle of exact period $k$. In appropriate local conformal coordinates, the $2k$-th iterate of such a map has the form 
$z\mapsto z+z^{q+1}+\ldots$ with $q\in\{1,2\}$. 
\end{lemma}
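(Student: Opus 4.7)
My plan is to decompose the statement into three pieces: boundary parameters are parabolic (from a multiplier calculation), the local holomorphic form of $f_c^{\circ 2k}$ is as claimed (from a routine change of variables), and the parabolic multiplicity $q$ is at most $2$ (from a critical-point count, which will also rule out any drop in period). The only serious obstacle is the bound $q\leq 2$.

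For the parabolic character, the key observation is that for odd $k$ the multiplier of a period-$k$ cycle of $f_c$ is automatically a non-negative real number. Indeed, $f_c^{\circ 2k}=(f_c^{\circ k})^{\circ 2}$ is holomorphic, and at any cycle point $z_0$ the chain rule gives $(f_c^{\circ 2k})'(z_0)=|\partial_{\bar z}f_c^{\circ k}(z_0)|^2\geq 0$. Inside the period-$k$ hyperbolic component the multiplier lies in $[0,1)$, so on the boundary it can only equal $1$, and the cycle is parabolic.

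For the normal form I would center coordinates at a parabolic fixed point $z_0$ of $f_c^{\circ k}$, apply a linear change of variable to absorb the unit-modulus phase of the antiholomorphic derivative $\alpha$ at $z_0$, and write $f_c^{\circ k}(w)=\bar w+a\bar w^{n+1}+O(\bar w^{n+2})$ for the least $n\geq 1$ with $a\neq 0$. A direct iteration then gives $f_c^{\circ 2k}(w)=w+2\operatorname{Re}(a)\,w^{n+1}+O(w^{n+2})$, which after a trivial rescaling becomes the required form $z\mapsto z+z^{q+1}+\ldots$, with $q+1$ the smallest exponent of a nonvanishing nonlinear term.

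The bound $q\leq 2$ is the heart of the lemma, and I would prove it by applying Fatou's critical-point theorem to the holomorphic polynomial $f_c^{\circ 2}$. Each of the $q$ cycles of attracting petals at the parabolic cycle is an immediate basin cycle of periodic Fatou components of $f_c^{\circ 2}$, and therefore must contain a critical point of $f_c^{\circ 2}$ in one of its periodic components. The critical set of $f_c^{\circ 2}$ is $\{0\}\cup f_c^{-1}(0)$, and the $d$ preimages of $0$ under $f_c$ all map to the single value $c$ under $f_c^{\circ 2}$. If $c$ lies in a periodic petal $p$, then $f_c^{-2}(p)$ decomposes into exactly $d$ connected components --- one around each preimage of $0$ --- of which only one is itself periodic (namely, the petal preceding $p$ in its cycle); the remaining $d-1$ components lie in strictly pre-periodic Fatou components. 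Hence at most two critical points of $f_c^{\circ 2}$ (the point $0$ and one preimage of $0$) can lie in immediate basins of petal cycles, forcing $q\leq 2$. Exact period $k$ then follows: any satellite-style period drop $k\to k'=k/m$ with $m\geq 2$ would force $q\geq m$ via a standard bifurcation count, but $m$ must be odd (as $k$ is), so $m\geq 3$, contradicting $q\leq 2$.
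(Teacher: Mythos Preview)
The paper itself does not give a proof here; it simply cites \cite[Lemma~2.9]{MNS}. Your outline is essentially the standard argument, and your treatment of the multiplier (forcing $(f_c^{\circ 2k})'(z_0)=|\partial_{\bar z}f_c^{\circ k}(z_0)|^2\in[0,\infty)$, hence $=1$ on $\partial H$) and of the bound $q\le 2$ via the two critical orbits of $f_c^{\circ 2}$ is correct. A small quibble on the normal form: your formula $f_c^{\circ 2k}(w)=w+2\operatorname{Re}(a)\,w^{n+1}+\ldots$ is right, but if $\operatorname{Re}(a)=0$ you cannot rescale; you should simply say $f_c^{\circ 2k}$ is holomorphic with multiplier $1$ and take $q+1$ to be the order of vanishing of $f_c^{\circ 2k}-\mathrm{id}$.

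There is, however, a genuine gap in your last sentence. The ``standard bifurcation count'' giving $q\ge m$ at a period drop $k\to k'=k/m$ presupposes that the multiplier of the limiting period-$k'$ cycle (for the holomorphic return map $f_{c_0}^{\circ 2k'}$) is a \emph{primitive} $m$-th root of unity: that is what forces the resonance $q\equiv 0\pmod m$. But you have already shown that, since $k'$ is odd, this multiplier lies in $[0,\infty)$; combined with $\mu^m=1$ it must equal $1$, so the satellite picture does not apply and the inequality $q\ge m$ is unjustified.

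The repair is a direct fixed-point count. With $\mu=1$, the germs $f_{c_0}^{\circ 2k'}$ and $f_{c_0}^{\circ 2k}=(f_{c_0}^{\circ 2k'})^{m}$ have the \emph{same} multiplicity $q+1$ at $z_0$. Hence for $c$ near $c_0$ the $q+1$ fixed points of $f_c^{\circ 2k}$ near $z_0$ coincide with the $q+1$ fixed points of $f_c^{\circ 2k'}$. In particular, the $m$ points of the attracting period-$k$ cycle near $z_0$ are fixed by $f_c^{\circ 2k'}$; since they are also fixed by $f_c^{\circ k}$ and $\gcd(k,2k')=k'$ (as $k$ is odd), they are fixed by $f_c^{\circ k'}$, contradicting exact period $k$. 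This replaces your satellite argument and completes the proof.
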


\begin{proof} 
See \cite[Lemma 2.8]{MNS}.
\end{proof}

This leads to the following classification of odd periodic parabolic points.

\begin{definition}[Parabolic Cusps]\label{DefCusp}
A parameter $c$ will be called a {\em cusp point} if it has a parabolic 
periodic point of odd period such that $q=2$ in the previous lemma. Otherwise, it is called a \emph{simple} (or non-cusp) parabolic parameter.
\end{definition}

In holomorphic dynamics, the local dynamics in attracting petals of parabolic periodic points is well-understood: there is a local coordinate $\zeta$ which conjugates the first-return dynamics to the form $\zeta\mapsto\zeta+1$ in a right half place \cite[Section~10]{M1new}. Such a coordinate $\zeta$ is called a \emph{Fatou coordinate}. Thus the quotient of the petal by the dynamics is isomorphic to a bi-infinite cylinder, called an \emph{Ecalle cylinder}. Note that Fatou coordinates are uniquely determined up to addition by a complex constant. 

In antiholomorphic dynamics, the situation is at the same time restricted and richer. Indifferent dynamics of odd period is always parabolic because for an indifferent periodic point of odd period $k$, the $2k$-th iterate is holomorphic with positive real multiplier, hence parabolic as described above. On the other hand, additional structure is given by the antiholomorphic intermediate iterate. 

\begin{lemma}
Suppose $z_0$ is a parabolic periodic point of odd period $k$ of $f_c$ with only one petal (i.e. $c$ is not a cusp) and $U$ is a periodic Fatou component with $z_0 \in \partial U$. Then there is an open subset $V \subset U$ with $z_0 \in \partial V$ and $f_c^{\circ k} (V) \subset V$ so that for every $z \in U$, there is an $n \in \mathbb{N}$ with $f_c^{\circ nk}(z)\in 
V$. Moreover, there is a univalent map $\Phi \colon V \to \mathbb{C}$ with $\Phi(f_c^{\circ k}(z)) = \overline{\Phi(z)}+1/2$, and $\Phi(V)$ contains a right half plane. This map $\Phi$ is unique up to horizontal translation. 
\end{lemma}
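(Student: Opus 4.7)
The plan is to reduce the statement to the classical holomorphic Fatou coordinate theorem applied to $g := f_c^{\circ 2k}$, and then extract the antiholomorphic normalization from uniqueness of Fatou coordinates.

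First, since $k$ is odd, $g = f_c^{\circ 2k}$ is holomorphic. Because $c$ is not a cusp, Lemma \ref{LemOddIndiffDyn} gives $q=1$, so $g$ has a simple parabolic fixed point at $z_0$ with a single petal. The classical Leau--Fatou flower theorem then provides an attracting petal $V \subset U$ with $z_0 \in \partial V$, together with a univalent holomorphic Fatou coordinate $\phi \colon V \to \mathbb{C}$ conjugating $g$ to $w \mapsto w+1$, whose image contains a right half-plane. I would verify the auxiliary properties as follows: $f_c^{\circ k}$ fixes $z_0$ and commutes with $g$, so picking the standard attracting petal (suitably shrunk to preserve univalence of $\phi$) makes $V$ forward-invariant under $f_c^{\circ k}$; and since every orbit in $U$ accumulates at $z_0$ while $V$ contains a one-sided neighborhood of $z_0$ inside $U$, every $z \in U$ eventually enters $V$ under $g$-iteration, hence also under $f_c^{\circ k}$-iteration (take an even exponent).

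The functional equation now comes from uniqueness of Fatou coordinates. Set $\tilde\phi(z) := \overline{\phi(f_c^{\circ k}(z))}$; since $f_c^{\circ k}$ is antiholomorphic, $\tilde\phi$ is holomorphic on $V$, and using that $f_c^{\circ k}$ commutes with $g$, a direct computation gives $\tilde\phi \circ g = \tilde\phi + 1$. Thus $\tilde\phi$ is itself a Fatou coordinate for $g$, so by classical uniqueness up to an additive complex constant there is an $a \in \mathbb{C}$ with $\tilde\phi = \phi + a$, that is
\[
\phi(f_c^{\circ k}(z)) \;=\; \overline{\phi(z)} + \overline{a}.
\]
Applying this identity twice and using $f_c^{\circ 2k} = g$ yields $\phi(z) + 1 = \phi(z) + a + \overline{a}$, so $\mathrm{Re}(a) = 1/2$. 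Writing $a = 1/2 + i\beta$ with $\beta \in \mathbb{R}$ and replacing $\phi$ by $\Phi := \phi + \gamma$ for $\gamma \in \mathbb{C}$ turns the identity into
\[
\Phi(f_c^{\circ k}(z)) \;=\; \overline{\Phi(z)} + \tfrac{1}{2} + i\bigl(2\,\mathrm{Im}\,\gamma - \beta\bigr);
\]
choosing $\mathrm{Im}\,\gamma = \beta/2$ produces exactly the claimed normalization, while $\mathrm{Re}\,\gamma$ remains a free real parameter, yielding uniqueness of $\Phi$ up to a real (horizontal) additive constant.

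The main obstacle I expect is the first step: organizing $V$ so that it is simultaneously forward-invariant under $f_c^{\circ k}$, absorbing for every orbit in $U$, and supports a univalent Fatou coordinate. This is a bookkeeping issue within the standard Leau--Fatou theory that is well-understood in the holomorphic setting, and I would import it, using the commutation of $f_c^{\circ k}$ with $g$ to arrange the invariance. The genuinely new content is the short symbolic calculation in the second step, which simultaneously delivers the antiholomorphic functional equation with coefficient $1/2$ and the uniqueness of $\Phi$ up to horizontal translation.
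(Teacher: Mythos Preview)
Your argument is correct. The paper does not actually give a proof of this lemma: it simply cites \cite[Lemma~2.3]{HS}. Your approach---pass to the holomorphic second iterate $g=f_c^{\circ 2k}$, invoke the classical Leau--Fatou coordinate, then use uniqueness of Fatou coordinates to deduce that $\overline{\phi\circ f_c^{\circ k}}$ differs from $\phi$ by a constant with real part $1/2$, and finally normalize the imaginary part---is the standard one and is almost certainly what the cited reference does. The computation you give for the constant and for the residual real freedom in $\Phi$ is correct.

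One small point worth tightening: to apply the uniqueness argument you need $\tilde\phi$ to be defined on a set where Fatou coordinates are determined up to an additive constant, which in practice means you want $f_c^{\circ k}(V)\subset V$ \emph{before} you form $\tilde\phi$. You flag this yourself; the cleanest way is to take a standard attracting petal $V_0$ for $g$ and replace it by $V:=V_0\cap f_c^{\circ k}(V_0)$ (or, equivalently, choose a petal bounded by a curve that is symmetric under the local antiholomorphic involution induced by $f_c^{\circ k}$), then restrict $\phi$ to $V$. Since $f_c^{\circ k}$ permutes the ends of the Ecalle cylinder and fixes $z_0$, such a $V$ is still a petal with $\phi(V)$ containing a right half-plane, and now $f_c^{\circ k}(V)\subset V$ holds. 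With that in place, your derivation goes through verbatim.
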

\begin{proof} 
See \cite[Lemma 2.3]{HS}.
\end{proof}

The map $\Phi$ will be called an  \emph{antiholomorphic Fatou coordinate} for the petal $V$. The antiholomorphic iterate interchanges both ends of the Ecalle cylinder, so it must fix one horizontal line around this cylinder (the \emph{equator}). The change of coordinate $\Phi$ has been so chosen that it maps the equator to the the real axis.  We will call the vertical Fatou coordinate the \emph{Ecalle height}. Its origin is the equator. The existence of this distinguished real line, or equivalently an intrinsic meaning to Ecalle height, is specific to antiholomorphic maps. 

The Ecalle height of the critical value of $f_c$ is called the \emph{critical Ecalle height} of $f_c$, and it plays a special role in antiholomorphic dynamics. The next theorem proves the existence of real-analytic arcs of non-cusp parabolic parameters on the boundaries of odd period hyperbolic components of the multicorns.

\begin{theorem}[Parabolic arcs]
Let $c_0$ be a parameter such that $f_{c_0}$ has a parabolic cycle of odd period, and suppose that $c_0$ is not a cusp. Then $c_0$ is on a parabolic arc in the  following sense: there  exists an injective real-analytic map $c:\mathbb{R}\rightarrow\mathcal{M}_d^*$ such that for all $h\in\mathbb{R}$, $c(h)$ is a non-cusp parabolic parameter, the critical Ecalle height of $f_{c(h)}$ is $h$, the antiholomorphic polynomials $f_{c(h)}$ have quasiconformally equivalent but conformally distinct dynamics, and $c_0$ is an interior point of the simple real-analytic arc $c(\mathbb{R})$.
\end{theorem}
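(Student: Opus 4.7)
My plan is a quasiconformal surgery along the antiholomorphic Ecalle cylinder, in the spirit of the deformation technique alluded to in the later sections of the paper. The broad idea is that, since the antiholomorphic first-return map acts on the Ecalle cylinder as $w\mapsto\overline{w}+1/2$ and the equator is an intrinsic horizontal line, the critical Ecalle height $h$ is a well-defined real conformal invariant of $f_{c_0}$, and one can realize every prescribed value of this invariant by a qc deformation that is trivial outside the parabolic basin. This will produce a real-analytic family of antiholomorphic polynomials conjugate to $f_{c_0}$ with prescribed critical Ecalle heights, from which the arc $c(\mathbb{R})$ is read off after affine normalization.

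In detail, I would first fix the antiholomorphic Fatou coordinate $\Phi$ provided by the previous lemma and let $h_0$ be the Ecalle height of the critical value of $f_{c_0}$. On the Ecalle cylinder $\mathbb{C}/\mathbb{Z}$ I would, for each $h\in\mathbb{R}$, choose a quasiconformal diffeomorphism $A_h:\mathbb{C}/\mathbb{Z}\to\mathbb{C}/\mathbb{Z}$ that (i) commutes with the involution $w\mapsto \overline{w}+1/2$ (so that it descends from the antiholomorphic dynamics), (ii) is the identity outside a compact neighborhood of the equator, and (iii) sends height $h_0$ to height $h$ on the line through the critical value; a concrete choice is an affine vertical stretch smoothly interpolated to the identity near the ends. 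Pulling back the Beltrami coefficient of $A_h$ by $\Phi$ yields a Beltrami form on a fundamental domain $V$ of the immediate parabolic basin that is invariant under $f_{c_0}^{\circ k}$. Spreading this form backwards by the full iteration of $f_{c_0}$ and extending by $0$ off the grand orbit of the basin produces an $f_{c_0}$-invariant Beltrami coefficient $\mu_h$ on $\mathbb{C}$ with $\|\mu_h\|_\infty<1$, depending real-analytically on $h$.

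By the measurable Riemann mapping theorem with parameters, I can integrate $\mu_h$ by a quasiconformal homeomorphism $\psi_h$ of $\mathbb{C}$, suitably normalized, so that $g_h:=\psi_h\circ f_{c_0}\circ\psi_h^{-1}$ is antiholomorphic in the standard complex structure. Since $\mu_h$ is supported on a bounded set, $g_h$ is a polynomial, and after an affine change of coordinates it takes the form $\overline{z}^d+c(h)$ for a unique $c(h)\in\mathbb{C}$, with real-analytic dependence on $h$. By construction $\psi_h$ conjugates $f_{c_0}$ to $f_{c(h)}$, so the parabolic cycle, the connectivity of the filled-in Julia set, and the non-cusp condition (which is a local analytic invariant read off the Fatou coordinate) are all preserved; in particular $c(h)\in\mathcal{M}_d^*$. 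Chasing $\psi_h$ through the Fatou coordinate shows that the critical Ecalle height of $f_{c(h)}$ is exactly $h$, so the map $h\mapsto c(h)$ is injective (different Ecalle heights cannot give conformally conjugate maps), and the parameter $c_0=c(h_0)$ lies in the interior of the image.

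The main technical obstacle I expect is the verification that the resulting parametrization $c(h)$ is genuinely real-analytic and injective on all of $\mathbb{R}$, and in particular that one can cover \emph{every} real height (not just a small interval around $h_0$) by a single such deformation without hitting a cusp or losing quasiconformality. The first point is standard once the Beltrami form is chosen to depend real-analytically on $h$ with uniformly bounded norm, but the second forces one to check that the antiholomorphic symmetry $w\mapsto \overline{w}+1/2$ is respected globally, that the stretch factor can be made uniformly bounded, and that no degeneration of the parabolic geometry occurs as $|h|\to\infty$; this is where the intrinsic real structure of the antiholomorphic Ecalle cylinder, as emphasized in the discussion preceding the theorem, does the essential work.
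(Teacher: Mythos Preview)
Your overall plan---quasiconformal deformation in the attracting Ecalle cylinder by a map commuting with $\zeta\mapsto\overline\zeta+\tfrac12$, pulled back and spread by the dynamics, then integrated via the measurable Riemann mapping theorem with parameters---is exactly the argument of \cite[Theorem~3.2]{MNS}, which is all the paper invokes here, and it is also the skeleton of the paper's own Lemma~\ref{qcdef}.

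The one place your sketch slips is the concrete choice of $A_h$. A map that is the identity outside a fixed band $\{|\mathrm{Im}\,\zeta|\le M\}$ about the equator necessarily carries that band to itself, so it cannot send height $h_0$ to a height $h$ with $|h|>M$; letting $M$ depend on $h$ spoils the clean real-analytic dependence you want. And an ``affine vertical stretch'' $(x,y)\mapsto(x,\lambda y)$ does commute with $\zeta\mapsto\overline\zeta+\tfrac12$, but it sends height $h_0$ to $\lambda h_0$, which never leaves the ray through $h_0$ and does nothing at all when $h_0=0$. What actually works---and what the paper writes out explicitly in Lemma~\ref{qcdef}---is a \emph{horizontal} shear $L_w(\zeta)=\zeta+ig(x)$ depending only on $x=\mathrm{Re}\,\zeta\in[0,1]$, piecewise linear with $g(\tfrac14)=w$ and $g(x+\tfrac12)=-g(x)$ (the latter condition being exactly commutation with $\zeta\mapsto\overline\zeta+\tfrac12$). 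Its Beltrami norm is $2|w|/\sqrt{1+4w^2}<1$ for every real $w$ and the dependence on $w$ is linear, so a single explicit family realizes every critical Ecalle height in one stroke and simultaneously dispatches your closing worry about reaching all of $\mathbb{R}$ without losing quasiconformality.
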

\begin{proof} 
See \cite[Theorem 3.2]{MNS}.
\end{proof}

The simple real-analytic arc $c(\mathbb{R})$ constructed in the above theorem is called a \emph{parabolic arc}, and is denoted by $\mathcal{C}$. The parametrization of the parabolic arc $\mathcal{C}$ given in the previous theorem is called the \emph{critical Ecalle height parametrization} of $\mathcal{C}$. 

The structure of the hyperbolic components of odd period plays an important role in the global topology of the parameter spaces. 

\begin{theorem}[Boundaries Of Odd Period Hyperbolic Components]\label{Exactly d+1}
The boundary of every hyperbolic component of odd period of $\mathcal{M}_d^*$ is a simple
closed curve consisting of exactly $d + 1$ parabolic cusp points as well as $d+1$ parabolic arcs, each connecting two parabolic cusps.
\end{theorem}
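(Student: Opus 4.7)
The plan is to combine the local structure given by the Parabolic Arcs theorem with a global counting argument on a hyperbolic component $H$ of odd period $k$ of $\mathcal{M}_d^*$.

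First, by the Indifferent Dynamics lemma above, every parameter on $\partial H$ carries a parabolic cycle of exact period $k$ and is either a cusp ($q=2$) or a simple parabolic ($q=1$). The Parabolic Arcs theorem already exhibits a real-analytic arc $\mathcal{C}$ of simple parabolic parameters through each non-cusp boundary point, parametrized by critical Ecalle height $h \in \mathbb{R}$. Thus $\partial H$ decomposes into an (a priori possibly infinite) disjoint union of open parabolic arcs and isolated cusp points.

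Second, I would analyze $c(h)$ as $h \to \pm\infty$ along each arc. Compactness of $\partial H$ in $\mathcal{M}_d^*$ forces accumulation points, and I would show that any such point must be a parabolic parameter of period $k$ with more than one petal, i.e.\ a cusp. The key mechanism is that the critical Ecalle height is a conformal invariant of the simple parabolic germ and depends real-analytically on $c$ along an arc, so an accumulation point with $h \to \pm\infty$ cannot itself be a non-cusp parabolic without contradicting the maximal Ecalle-height parametrization of the arc. Conversely, a local analysis at a cusp $c_0$---using that the $2k$-th iterate has normal form $z \mapsto z + a z^3 + \ldots$ with two attracting petals---shows that $c_0$ is the endpoint of exactly two parabolic arcs, which glue continuously at $c_0$. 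Together these produce an alternating cyclic structure of arcs and cusps along $\partial H$, so that once finiteness is established, $\partial H$ is automatically a simple closed curve.

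Third, I would count the cusps. The cleanest tool is the multiplier map $\rho \colon H \to \mathbb{D}$ associated with the holomorphic $2k$-th iterate at the attracting cycle. Properness and holomorphy make $\rho$ a branched covering of some finite degree $m$, and a boundary analysis identifies the preimage of $1$ under the continuous extension of $\rho$ to $\overline{H}$ as precisely the cusp set---cusps being the loci where the parabolic multiplicity $q$ jumps from $1$ to $2$. A degree computation---using either the $(d+1)$-fold rotational symmetry $c \mapsto \mu c$ with $\mu^{d+1} = 1$ that normalizes the unicritical antiholomorphic family, or a direct Ecalle-coordinate bookkeeping of the critical orbit---yields $m = d+1$. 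Hence there are exactly $d+1$ cusps and, by the alternation from the previous step, exactly $d+1$ arcs.

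The main obstacle will be the third step. Unlike the holomorphic case, where the multiplier of the attracting cycle is itself a holomorphic function of $c$ and its global degree on $H$ is computed by standard Douady--Hubbard arguments, here one must pass to the $2k$-th iterate and carefully account for the extra degrees of freedom contributed by the antiholomorphic intermediate iterate. Verifying that the global degree of $\rho$ equals $d+1$, and that each cusp contributes to the fiber $\rho^{-1}(1)$ with local multiplicities summing to $d+1$, is the delicate bookkeeping that simultaneously controls the finiteness of the cusp set and pins down its exact cardinality.
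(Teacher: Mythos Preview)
The paper does not prove this theorem; it simply cites \cite[Theorem 1.2]{MNS}. So there is no internal argument to compare against, and your proposal is necessarily an attempt to reconstruct the cited result rather than to match anything in the present paper.

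Your first two steps are reasonable and broadly in the spirit of how such results are proved. The third step, however, contains a genuine error. The multiplier map $\rho$ you invoke is \emph{not} a holomorphic function of the parameter $c$: the family $f_c(z)=\bar z^d+c$ is antiholomorphic in $z$, and the second iterate $f_c^{\circ 2}(z)=(z^d+\bar c)^d+c$ depends on both $c$ and $\bar c$, hence only real-analytically on $c$. More decisively, for an attracting cycle of odd period $k$ the first return $f_c^{\circ k}$ is antiholomorphic, so the multiplier of the holomorphic iterate $f_c^{\circ 2k}$ at that cycle is always a nonnegative \emph{real} number; thus $\rho$ maps $H$ into $[0,1)$, not into $\mathbb{D}$. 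There is no holomorphic branched-cover structure to exploit and no covering degree to read off, so your mechanism for producing the number $d+1$ collapses.

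Your fallback suggestion of using the $(d+1)$-fold rotational symmetry $c\mapsto\mu c$, $\mu^{d+1}=1$, also does not yield the count: this symmetry permutes the hyperbolic components of a given odd period but does not fix an individual component of period $k>1$, so it cannot by itself determine the number of cusps on a single component. In the cited reference the count $d+1$ is obtained by a different mechanism---essentially through the internal model of the odd-period component by degree-$d$ anti-Blaschke products and the associated combinatorics of rays at the characteristic parabolic point---rather than via any holomorphic multiplier covering.
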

\begin{proof}
See \cite[Theorem 1.2]{MNS}.
\end{proof}

We refer the readers to \cite{Na1,NS,HS,MNS,IM} for a more comprehensive account on the combinatorics and topology of the multicorns.

\section{Constructing an Analytic Family of Q.C. Deformations}\label{non-singular}

Throughout this section, we fix a parabolic arc $\mathcal{C}$ such that for each $c \in \mathcal{C}$, $f_c$ has a $k$-periodic parabolic cycle. Recall that there is a dynamically defined parametrization of $\mathcal{C}$, which is its critical Ecalle height parametrization $c : \mathbb{R} \rightarrow \mathcal{C}$. We will show that the polynomials on $\mathcal{C}$ can be quasi-conformally deformed to yield an analytic family of q.c. conjugate maps; in particular, they will be structurally stable on a suitable algebraic curve.

We embed our family $\{f_c(z) = \overline{z}^d + c, \hspace{1mm} c \in \mathbb{C}\}$ in the family of holomorphic polynomials $\mathcal{F}_d = \lbrace P_{a,b}(z) = \left(z^d + a \right)^d + b, \hspace{1mm} a, b \in \mathbb{C} \rbrace$. Since $f_c^{\circ 2} = P_{\overline{c},c}$, the connectedness locus $\mathcal{C}(\mathcal{F}_d)$ of this family intersects the slice $\lbrace a=\overline{b}\rbrace$ in $\mathcal{M}_d^*$.

It will be useful to have the following characterization of the elements of $\mathcal{F}_d$ amongst all monic centered polynomials of degree $d^2$. 

\begin{lemma}\label{characterization}
Let $P$ be a monic centered polynomial of degree $d^2$. Then the following are equivalent:
\begin{enumerate}
\item $P \in \mathcal{F}_d$ with $a \neq 0$.

\item $P$ has exactly $d+1$ distinct critical points $\lbrace \alpha_1, \alpha_2, \cdots, \alpha_{d+1} \rbrace$ with $deg_{\alpha_i}(P) = d$ for each $i$ and such that $P(\alpha_1) = P(\alpha_2) = \cdots = P(\alpha_d)$.
\end{enumerate}
\end{lemma}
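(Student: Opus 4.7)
The plan is to establish the two implications separately, with (1) $\Rightarrow$ (2) being a direct computation and (2) $\Rightarrow$ (1) being the substantive direction.

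For (1) $\Rightarrow$ (2), I would simply differentiate: if $f(z) = (z^d+a)^d + b$ with $a \neq 0$, then $f'(z) = d^2\, z^{d-1}(z^d+a)^{d-1}$. The critical set is then the point $0$ together with the $d$ distinct roots of $z^d+a$ (distinct because $a \neq 0$), giving $d+1$ critical points each of local degree $d$. The $d$ roots of $z^d+a$ all map to $b$ under $f$, while $0$ maps to $a^d + b \neq b$, which verifies (2).

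For (2) $\Rightarrow$ (1), the idea is to use the hypothesis that $d$ critical points of local degree $d$ share a common value to force a $d$-th power structure on $f$. I would start with a dimension count via Riemann--Hurwitz: a degree $d^2$ polynomial has total critical multiplicity $d^2 - 1$, and the $d+1$ given critical points of local degree $d$ contribute exactly $(d+1)(d-1) = d^2 - 1$; so there are no other critical points. Denote $b' = f(\alpha_1) = \cdots = f(\alpha_d)$. Each $\alpha_i$ ($i \leq d$) is a zero of $f - b'$ of multiplicity at least $d$ (a critical point of local degree $d$ whose value is $b'$), and together these account for $d \cdot d = d^2 = \deg f$ zeros, so
\[
f(z) - b' \;=\; P(z)^d \qquad \text{where} \qquad P(z) = \prod_{i=1}^{d}(z - \alpha_i).
\]
Now $\alpha_{d+1}$ must also be a critical point of $P^d$, and since $P(\alpha_{d+1}) \neq 0$ (the $\alpha_i$ are distinct), it has to be a zero of $P'$. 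Matching the local degree at $\alpha_{d+1}$ forces it to be a root of $P'$ of multiplicity $d-1$, and because $\deg P' = d-1$ we conclude $P'(z) = d(z - \alpha_{d+1})^{d-1}$, hence $P(z) = (z - \alpha_{d+1})^d + a$ for some constant $a$.

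Finally, I would pin down $\alpha_{d+1} = 0$ from the centered hypothesis: the coefficient of $z^{d^2-1}$ in $f = P^d + b'$ equals $d$ times the coefficient of $z^{d-1}$ in $P$, which is $-d\alpha_{d+1}$, so centeredness gives $\alpha_{d+1} = 0$. Thus $P(z) = z^d + a$ and $f(z) = (z^d + a)^d + b'$, i.e. $f = P_{a,b'} \in \mathcal{F}_d$. Nonvanishing of $a$ follows from the distinctness of the $\alpha_i$ ($a=0$ would collapse them to $0$). The main obstacle is the structural step of extracting the $d$-th power factorization of $f - b'$; once Riemann--Hurwitz ensures no hidden critical multiplicity and the common-value hypothesis supplies the multiplicity at each of the first $d$ critical points, the rest is essentially bookkeeping on polynomial coefficients.
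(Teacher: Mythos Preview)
Your proof is correct and follows essentially the same approach as the paper: both extract the factorization $f-b=P^d$ from the $d$ common critical values, deduce that $P$ is unicritical from the remaining critical point $\alpha_{d+1}$, and then use centeredness to force $P(z)=z^d+a$. Your version simply makes a few steps more explicit (the Riemann--Hurwitz count, the multiplicity of $\alpha_{d+1}$ in $P'$, and the coefficient computation for centeredness) where the paper is terser.
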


\begin{proof}
$(1) \implies (2).$ The critical points of any $P \in \mathcal{F}_d$ are $0$ and the $d$ roots of the equation $z^d+a = 0$. Since $a \neq 0$, these points are all distinct and $P$ has local degree $d$ at each of them. The other property is immediate.

$(2) \implies (1).$ Let, $b = P(\alpha_1) = P(\alpha_2) = \cdots = P(\alpha_d)$. Since each $\alpha_i$ ($i = 1, 2, \cdots, d$) maps in a $d$-to-$1$ fashion to $b$ and the degree of $P$ is $d^2$, these must be all the pre-images of $b$. Therefore, $P(z) = p(z)^d +b$, where $p(z) = (z-\alpha_1)\cdots (z-\alpha_d)$ (here, we have used the fact that $P$ is monic). Note that the critical points of $P$ are precisely the zeroes and the critical points of $p$. Since we have used up the $d$ distinct zeroes of $p$ and are left with only one critical point of $P$, it follows that this critical point $\alpha_{d+1}$ must be the only critical point of $p$. Hence, $p$ must be unicritical. A brief computation (using the fact that $P$ is centered) now shows that $p(z) = z^d +a$, for some $a \in \mathbb{C}$. The fact that all the $\alpha_i$ are distinct tells that $a$ must be non-zero. Thus, we have shown that $P(z) = (z^d+a)^d+b$ with $a \in \mathbb{C}^*$, $b \in \mathbb{C}$. 
\end{proof}

\begin{remark}
Condition (2) of the previous lemma is preserved under topological conjugacies. 
\end{remark}

Before we can turn our attention to the algebraic curves that are the principal objects of study of this paper, we need to take a brief digression to algebra. In what follows, we will discuss the properties of discriminants and resultants of complex polynomials. 

For a monic complex polynomial $Q(z) = z^n+a_{n-1}z^{n-1}+\cdots+a_1z+a_0 \in \mathbb{C}[z]$ with roots $\{\beta_1, \cdots, \beta_n\}$, the discriminant of $Q(z)$ is defined as:

\begin{center}
$\displaystyle\textrm{disc}_z(Q(z)):=\prod_{i<j}(\beta_i-\beta_j)^2$
\end{center}

It is evident from the definition of the discriminant that $Q(z)$ has a multiple (or repeated) root if and only if $\textrm{disc}_z(Q(z))$ vanishes. Moreover, $\textrm{disc}_z(Q(z))$ is a symmetric function of the roots $\{\beta_1, \cdots, \beta_n\}$ of $Q(z)$, hence it can be expressed as a polynomial function of the \emph{elementary} symmetric functions in the variables $\{\beta_1, \cdots, \beta_n\}$. But the elementary symmetric functions in $\{\beta_1, \cdots, \beta_n\}$, up to sign, coincide with the coefficients $\{a_0, \cdots, a_{n-1}\}$ of $Q(z)$. Therefore, $\textrm{disc}_z(Q(z))$ can be written as a polynomial in the coefficients of $Q(z)$ (compare \cite[\S 14.6]{DuFo}).

In a similar vein, the \emph{resultant} of two univariate polynomials $P$ and $Q$ is a polynomial in the coefficients of $P$ and $Q$ which vanishes if and only if $P$ and $Q$ have a common root; i.e. if and only if $\mathrm{deg(gcd}(P,Q)) \geq 1$. It is sometimes desirable to generalize the notion of resultants to be able to predict the exact degree of the gcd of $P$ and $Q$ in terms of their coefficients. This can be done by the so-called \emph{subresultants}, which are polynomials in the coefficients of $P$ and $Q$, and whose order of vanishing tells us the degree of the gcd of the two polynomials $P$ and $Q$. We refer the readers to \cite[\S 4.2.2]{BPC} for the precise definition of subresultants.

The main property of subresultants that will be used in this paper is the following.
 
\begin{lemma}\label{subresultants}
For two univariate polynomials $P$ and $Q$ over a domain (of degree $p$ and $q$ respectively, such that $p > q$), $\mathrm{deg(gcd}(P,Q)) \geq j$ for some $0 \leq j \leq q$ if and only if $sRes_0(P,Q)=\cdots =sRes_{j-1}(P,Q)=0$, where each $sRes_i(P,Q)$ is a polynomial expression in the coefficients of $P$ and $Q$.
\end{lemma} 

\begin{proof}
See \cite[Proposition 4.25]{BPC}.
\end{proof}

We will now use the concept of discriminants to define parabolic curves in the parameter space of the family $\mathcal{F}_d$. Since every antiholomorphic polynomial $f_c$ on a parabolic arc $\mathcal{C}$ has a parabolic cycle of odd period $k$ and multiplier $1$, its holomorphic second iterate $f_c^{\circ 2} = P_{\overline{c},c}$ also has a $k$-periodic parabolic cycle of multiplier $1$ (note that as $k$ is odd, we have that $2$ and $k$ are relatively prime). Therefore, under the embedding $c \mapsto (\overline{c}, c)$ of the family $\{f_c\}_{c\in\mathbb{C}}$ of unicritical antiholomorphic polynomials into the family of holomorphic polynomials $\mathcal{F}_d$, the image of the parabolic arc $\mathcal{C}$ sits in the set of all $k$-periodic parabolic parameters of multiplier $1$. However, if $P_{a,b}$ has a $k$-periodic parabolic cycle of multiplier $1$, then $( P_{a,b}^{\circ k}(z) - z)$ has a multiple root. Therefore by definition of discriminants, the parabolic arc $\mathcal{C}$ embeds into the set:

\begin{center}
$P_k = \lbrace (a, b) \in \mathbb{C}^2 : \Psi(a,b) := \mathrm{disc}_z ( P_{a,b}^{\circ k}(z) - z) = 0 \rbrace$.
\end{center}

Note that $(P_{a,b}^{\circ k}(z) - z)\in \mathbb{C}[a,b][z]$; i.e. it is a polynomial in the variable $z$ with coefficients from the ring $\mathbb{C}[a,b]$. It now follows from our discussion on discriminants that $\mathrm{disc}_z (P_{a,b}^{\circ k}(z)-z)$ is a polynomial in the variables $a$, $b$ with complex coefficients. Hence $P_k$ is an algebraic curve in $\mathbb{C}^2$.

We are now in a position to prove the main technical lemma, which allows us to embed the parabolic arc $\mathcal{C}$ in a complex one-dimensional deformation space contained in $P_k$. The idea of the proof is as follows. For any $c_0 \in \mathcal{C}$, the second iterate $P_{\overline{c_0},c_0}=f_{c_0}^{\circ 2}$ of the antiholomorphic polynomial $f_{c_0}$ is a holomorphic polynomial with two infinite critical orbits. Both these critical orbits are attracted by the unique parabolic cycle of $P_{\overline{c_0},c_0}$, and hence we can choose two representatives of these two critical orbits in a fundamental domain of an attracting petal of $P_{\overline{c_0},c_0}$. The difference of the attracting Fatou coordinates of these two chosen representatives (the difference does not depend on the choice of the representatives as long as they are chosen from the same fundamental domain) turn out to be a conformal conjugacy invariant of the polynomial. This invariant will be called the \emph{Fatou vector} of the polynomial. We will deform the polynomial in such a way that the qualitative behavior of the dynamics remains the same, but the Fatou vector varies over a bi-infinite strip. Thus, each deformation would give us a topologically conjugate, but conformally different polynomial.

\begin{lemma}[Extending the Deformation]\label{qcdef}
Let $c_0 = c(0) \in \mathcal{C}$. There exists an injective holomorphic map
\begin{center}
$F : S=\lbrace w = u + iv \in \mathbb{C} : \vert v \vert < \frac{1}{4} \rbrace \rightarrow P_k$,\\
$w \mapsto (a(w) , b(w))$ 
\end{center}
with $F(0) = (\overline{c_0},c_0)$ such that for any $w \in \mathbb{R}$, $F(w) = (\overline{c(w)} , c(w))$ where $c : \mathbb{R} \rightarrow \mathcal{C}$ is the critical Ecalle height parametrization of the parabolic arc. Further, all the polynomials $P_{a(w),b(w)}$ have q.c.-conjugate (but not conformally conjugate) dynamics.
\end{lemma}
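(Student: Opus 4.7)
The plan is to work with the holomorphic second iterate $P_0:=P_{\overline{c_0},c_0}=f_{c_0}^{\circ 2}$, a degree-$d^2$ polynomial whose $d+1$ critical points split into two grand orbits, both attracted to the parabolic $k$-cycle inherited from $f_{c_0}$. Choose a holomorphic attracting Fatou coordinate $\Phi$ on an attracting petal of $P_0^{\circ k}$, normalized so that $P_0^{\circ k}$ acts as $\zeta\mapsto\zeta+1$. After sufficiently many iterates, representatives $z_1,z_2$ of the two grand critical orbits lie in a single fundamental domain of $P_0^{\circ k}$ inside the petal, and the \emph{Fatou vector} $\tau_0:=\Phi(z_2)-\Phi(z_1)\pmod{\mathbb{Z}}$ is a conformal conjugacy invariant of $P_0$. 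The whole deformation is engineered to shift this invariant linearly by $w$.

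For $w=u+iv\in S$ I produce a $P_0$-invariant Beltrami form $\mu_w$ on $\mathbb{C}$ of sup-norm $<1$ as follows. In the Fatou coordinate, take a smooth cutoff $\chi:\mathbb{R}/\mathbb{Z}\to[0,1]$ which is $0$ near $\mathrm{Re}(\Phi(z_1))$ and $1$ near $\mathrm{Re}(\Phi(z_2))$ (descending to a function on the Ecalle cylinder), and consider the vertical shear $\Psi_w(\zeta)=\zeta+iw\,\chi(\mathrm{Re}(\zeta))$. A direct computation gives its Beltrami coefficient $iw\chi'/(2+iw\chi')$, which has sup-norm $<1$ exactly when $v\cdot\chi'(x)<1$ pointwise. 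The antiholomorphic involution $\zeta\mapsto\overline{\zeta}+1/2$ on the Ecalle cylinder of $f_{c_0}^{\circ k}$ forces $z_1$ and $z_2$ to be antipodal (their Fatou real parts differ by $1/2\pmod{1}$), so the cutoff must ramp through unit amplitude on each of the two half-cylinders, capping $\chi'_{\max}$ and producing the sharp bound $|v|<1/4$ in the statement. Transport $\mu_{\Psi_w}$ back to the petal via $\Phi^{-1}$, spread it over the entire parabolic basin by pullback through all forward iterates of $P_0$, and extend by $0$ on $J(P_0)$ and outside the basin to obtain the desired $P_0$-invariant $\mu_w$.

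Apply the Measurable Riemann Mapping Theorem to obtain a normalized qc homeomorphism $h_w:\mathbb{C}\to\mathbb{C}$ with $\overline{\partial}h_w=\mu_w\cdot\partial h_w$. The conjugate $Q_w:=h_w\circ P_0\circ h_w^{-1}$ is a monic centered polynomial of degree $d^2$ (by $P_0$-invariance of $\mu_w$ and the normalization of $h_w$) which still has $d+1$ distinct critical points of local degree $d$, of which $d$ share a single image $h_w(c_0)$. By Lemma \ref{characterization}, $Q_w=P_{a(w),b(w)}$ for uniquely determined $(a(w),b(w))$ with $a(w)\neq 0$. The parabolic $k$-cycle is preserved by conjugation, hence $F(w):=(a(w),b(w))\in P_k$; and the Ahlfors--Bers theorem (holomorphic dependence of solutions to the Beltrami equation on parameters), combined with the linear dependence of $\mu_w$ on $w$, gives holomorphicity of $F$. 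For $w=u\in\mathbb{R}$, the Beltrami form $\mu_u$ is compatible with the antiholomorphic symmetry of $P_0$ given by $f_{c_0}$, so $Q_u$ is again the holomorphic square of an antiholomorphic unicritical polynomial $f_{c(u)}$; matching imaginary parts of Fatou vectors identifies its critical Ecalle height as $u$, yielding $F(u)=(\overline{c(u)},c(u))$.

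The principal obstacle is verifying that $Q_w$ actually lies in the two-parameter family $\mathcal{F}_d$ rather than in the full space of monic centered degree-$d^2$ polynomials; this is precisely the role of Lemma \ref{characterization}, which translates the topologically invariant critical-orbit structure preserved by $h_w$ into membership in $\mathcal{F}_d$. A secondary technical issue is optimising $\chi$ to pin down the sharp strip-width $1/4$ from the antipodal positioning of $z_1$ and $z_2$. Finally, the injectivity of $F$ and the qc-but-not-conformal-conjugacy clause both follow at a stroke from the fact that distinct $w\in S$ produce distinct Fatou vectors $\tau_0+w\pmod{\mathbb{Z}}$ (the bound $|v|<1/4$ prevents vertical wrap-around in the cylinder), combined with the fact that the Fatou vector is a complete local conformal invariant on $P_k$.
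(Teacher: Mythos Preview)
Your approach coincides with the paper's: deform $P_0=f_{c_0}^{\circ 2}$ in attracting Fatou coordinates by a vertical shear, pull the resulting Beltrami form back to the basin, integrate with parameters, and invoke Lemma~\ref{characterization} to land in $\mathcal{F}_d$. The paper uses an explicit piecewise-linear shear $L_w$ on the fundamental strip in place of your smooth cutoff $\chi$, but that in itself is cosmetic. Injectivity via the Fatou vector is also the paper's argument.

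There is, however, a real gap in the step ``for $w=u\in\mathbb{R}$, the Beltrami form $\mu_u$ is compatible with the antiholomorphic symmetry''. Writing $g(\zeta)=\bar\zeta+1/2$ for the action of $f_{c_0}^{\circ k}$ in Fatou coordinates, one computes (for real $u$) that
\[
\Psi_u\circ g\circ\Psi_u^{-1}(\eta)=\bar\eta+\tfrac{1}{2}+iu\bigl(\chi(x)+\chi(x+\tfrac{1}{2})\bigr),\qquad x=\mathrm{Re}(\eta),
\]
so this conjugate is antiholomorphic only when $\chi(x)+\chi(x+\tfrac{1}{2})$ is constant. A generic $[0,1]$-valued cutoff with $\chi=0$ near $z_1$ and $\chi=1$ near $z_2$ does not have this property; you must impose it. The paper sidesteps this by building $L_w$ to \emph{commute} with $g$ for real $w$, which forces the shear to move the two critical values symmetrically by $+iw$ and $-iw$ rather than fixing one. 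A related consequence: with your one-sided shear (even after imposing $\chi(x)+\chi(x+\tfrac{1}{2})=1$), the new equator sits at height $u/2$ while the critical value stays at height $0$, so the deformed critical Ecalle height is $-u/2$, not $u$, and your $F$ actually satisfies $F(u)=(\overline{c(-u/2)},c(-u/2))$. This is fixed by a linear reparametrization of $w$ (or, more cleanly, by using a symmetric shear as the paper does), after which the strip width comes out to exactly $|v|<\tfrac{1}{4}$.
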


\begin{figure}[ht!]
\centering{\includegraphics[scale=0.5]{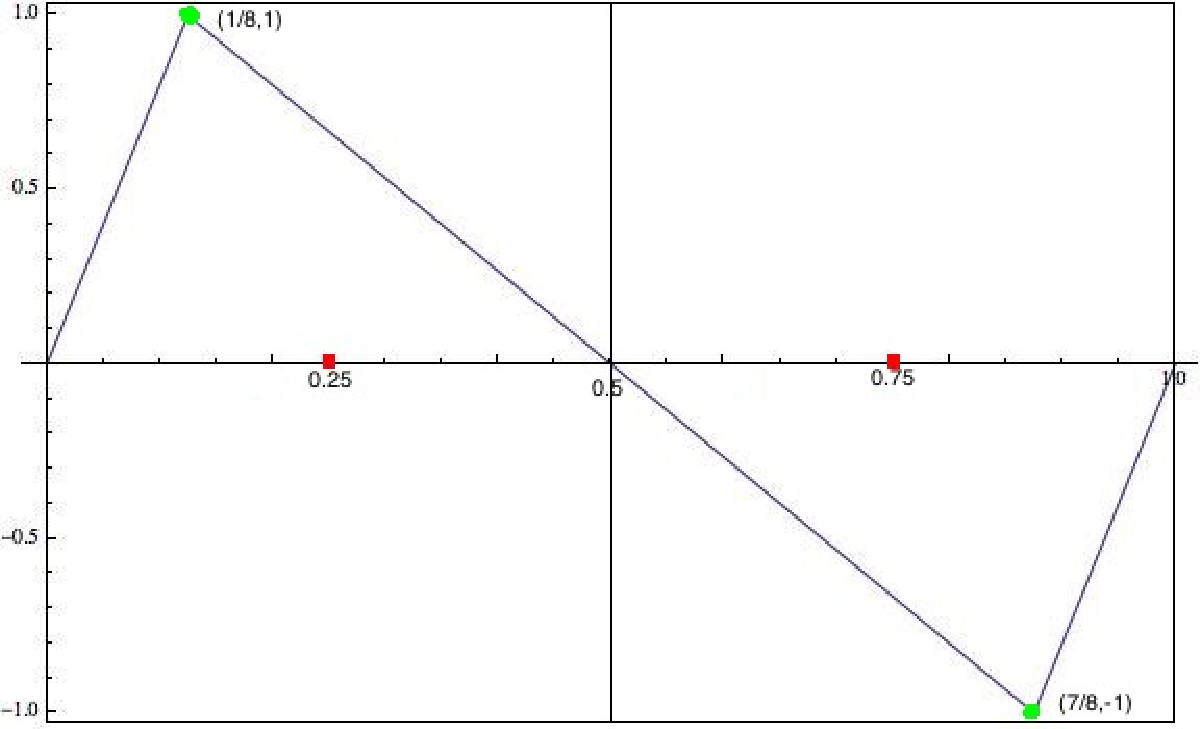}}

\centering{\includegraphics[scale=0.5]{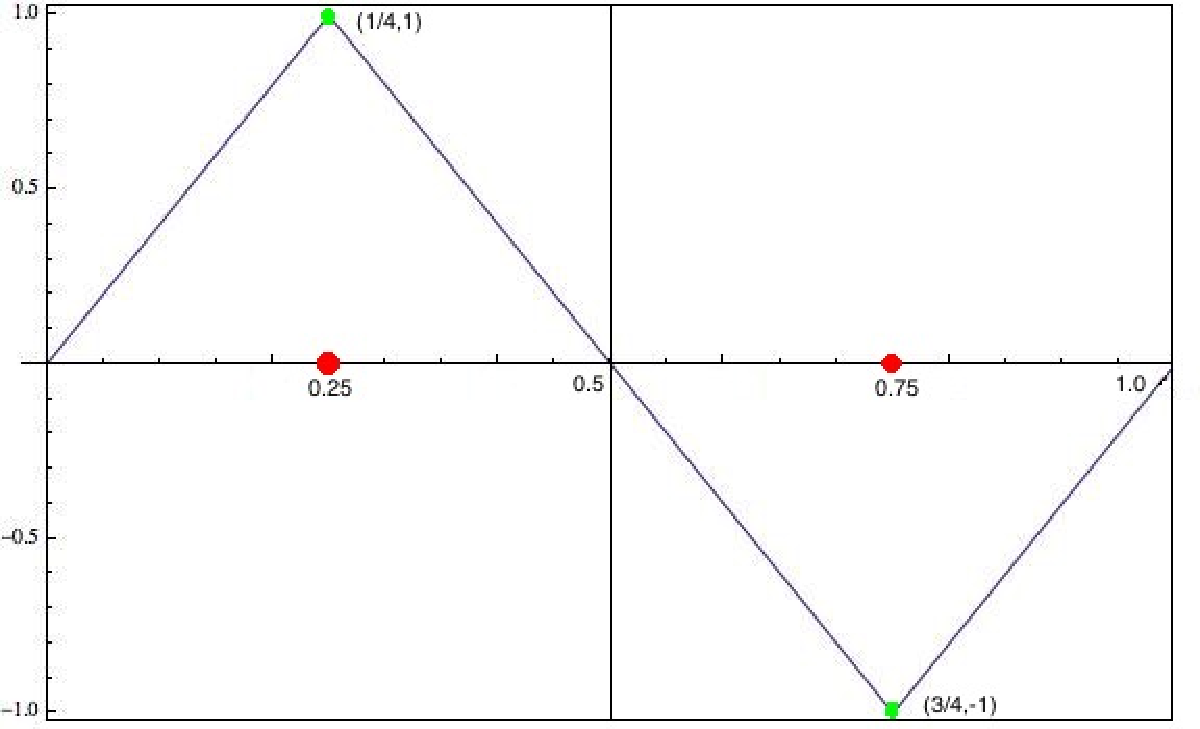}}
\caption{Pictorial representation of the image of $\left[0,1\right]$ under the quasiconformal map $L_w$; for $w=1+i/8$ (top) and $w=1$ (bottom). The Fatou coordinates of $c_0$ and $f_{c_0}^{\circ k} (c_0)$ are $1/4$ and $3/4$ respectively. For $w=1+i/8$, $L_w(1/4)=1/8+i$ and $L_w(3/4)=7/8-i$, and for $w=1$, $L_w(1/4)=1/4+i$ and $L_w(3/4)=3/4-i$.   Observe that $L_w$ commutes with $z\mapsto \overline{z}+1/2$ only when $w\in \mathbb{R}$.} 
\label{piecewise_linear} 
\end{figure} 
 
\begin{proof}
We construct a larger class of deformations which strictly contains the deformations constructed in \cite[Theorem 3.2]{MNS}. Choose the attracting Fatou coordinate $\Phi_0 : z \mapsto \zeta$ (at the parabolic point on the boundary of the unique Fatou component of $f_{c_0}$ that contains $c_0$) such that the unique geodesic invariant under the antiholomorphic dynamics maps to the real line and the critical value $c_0$ has Ecalle coordinates $\frac{1}{4}$ (this is possible since $c_0 = c(0)$). The polynomial $f_{c_0}^{\circ 2} = P_{\overline{c_0},c_0}$ has two critical values $c_0$ and $\overline{c_0}^d + c_0$. The map $f_{c_0}^{\circ k-1} = P_{\overline{c_0},c_0}^{\circ \frac{k-1}{2}}$ sends $\overline{c_0}^d + c_0$ to the Fatou component containing $c_0$. The Ecalle coordinates of $P_{\overline{c_0},c_0}^{\circ \frac{k-1}{2}}(\overline{c_0}^d + c_0) = f_{c_0}^{\circ k} (c_0)$ is $\frac{3}{4}$. We will simultaneously change the Ecalle coordinates of $c_0$ and $f_{c_0}^{\circ k} (c_0)$ in a controlled way, so that each perturbation gives a conformally different polynomial.

Setting $\zeta=x+iy$, we can change the conformal structure within the Ecalle cylinder by the quasi-conformal homeomorphism of $\left[ 0 , 1 \right] \times \mathbb{R}$ (compare Figure \ref{piecewise_linear}):

$$
L_w : \zeta \mapsto \left\{\begin{array}{ll}
                    \zeta +4i w x & \mbox{if}\ 0\leq x\leq 1/4 \\
                    \zeta + 2i w (1-2x) & \mbox{if}\ 1/4\leq x\leq 1/2 \\ 
                    \zeta - 2i w (2x-1) & \mbox{if}\ 1/2\leq x\leq 3/4 \\ 
                    \zeta -4 i w(1-x) & \mbox{if}\ 3/4\leq x\leq 1 
                      \end{array}\right. 
$$
for $w\in S$

Translating the map $L_w$ by positive integers, we get a q.c. homeomorphism of a right-half plane that commutes with the translation $z \mapsto z+1$. By the coordinate change $z\mapsto \zeta$, we can transport this Beltrami 
form (defined by this quasiconformal homeomorphism) into all the attracting petals, and it is forward invariant under $f_{c_0}^{\circ 2} = P_{\overline{c_0},c_0}$. It 
is easy to make it backward invariant by pulling it back along the 
dynamics. Extending it by the zero Beltrami form outside of  the entire 
parabolic basin, we obtain an $P_{\overline{c_0},c_0}$-invariant Beltrami form. Using the Measurable 
Riemann Mapping Theorem with parameters, we obtain a qc-map $\phi_w$ integrating this 
Beltrami form. Furthermore, if we normalize $\phi_w$ such that the conjugated map $\phi_w \circ P_{\overline{c_0},c_0} \circ \phi_w^{-1}$ is always a monic and centered polynomial, then the coefficients of this newly obtained polynomial will depend holomorphically on $w$ (since the Beltrami form depends complex analytically on $w$).

We need to check that this new polynomial belongs to our family $\mathcal{F}_d$. But this readily follows from Lemma \ref{characterization} and the remark thereafter. Therefore, this new polynomial must be of the form $P_{a(w),b(w)} = (z^d+a(w))^d+b(w)$. Since the coefficients of $P_{a(w),b(w)}$ depend holomorphically on $w$, the maps $w \mapsto a(w)$ and $w \mapsto b(w)$ are holomorphic. Its Fatou coordinate is given by $\Phi_w = L_w\circ\Phi_0\circ\phi_w^{-1}$. Note that the Ecalle coordinates of the two representatives of the two critical orbits of $\phi_w \circ P_{\overline{c_0},c_0} \circ \phi_w^{-1}$ are $L_w(\frac{1}{4}) = \frac{1}{4}+iw$ and $L_w(\frac{3}{4}) = \frac{3}{4}-iw$.

Thus, we obtain a complex analytic map $F : S \rightarrow \mathbb{C}^2 , w \mapsto (a(w) , b(w))$ (compare Figure \ref{smooth}). For real values of $w$, the map $L_w$ commutes with the map $z\mapsto \overline{z}+1/2$, and hence, the corresponding Beltrami form is invariant under the antiholomorphic polynomial $f_{c_0}$ (recall that $\Phi_0$ conjugates $f_{c_0}^{\circ k}$ to $z\mapsto \overline{z}+1/2$ in the attracting petal). Therefore, $\phi_w \circ f_{c_0} \circ \phi_w^{-1}$ is again a unicritical antiholomorphic polynomial in the family $\lbrace f_c\rbrace_{c\in \mathbb{C}}$. In particular, for $w \in \mathbb{R}$, the critical Ecalle height of $\phi_w \circ f_{c_0} \circ \phi_w^{-1}$ is $w$. Therefore, $F(w) = (\overline{c(w)},c(w) )\ \forall\ w\ \in\ \mathbb{R}.$

By construction, all the $P_{a(w),b(w)}$ are quasiconformally conjugate to $P_{\overline{c_0},c_0}$, and hence to each other. We now show that they are all conformally distinct. Define the \emph{Fatou vector} of $P_{a(w),b(w)}$ to be the quantity $\Phi_w ( P_{a(w), b(w)}^{\circ \frac{k-1}{2}}(a(w)^d+b(w))) - \Phi_w(b(w))$, where $\Phi_w$ is the Fatou coordinate of $P_{a(w),b(w)}$. Since the Fatou coordinate is unique up to addition by a complex constant, the Fatou vector defined above is a conformal conjugacy invariant. The Fatou vector of $P_{a(w),b(w)}$ is given by $\frac{1}{2} - 2iw$, which is different for different values of $w$. Hence, the polynomials $P_{a(w),b(w)}$ are conformally nonequivalent. In particular, the map $F$ is injective.

It remains to show that for any $w \in S$, $\left( a(w),b(w) \right) \in P_k$. It is easy to see that each $P_{a(w),b(w)}$ has a parabolic cycle of exact period $k$ (both critical orbits are contained in the Fatou set and converge to a $k$-periodic orbit sitting on the Julia set). Also, the first return map $P_{a(w),b(w)}^{\circ k}$ leaves the unique petal attached to every parabolic point invariant; so the multiplier of the parabolic cycle must be $1$. Hence, $\left( a(w),b(w) \right) \in P_k$.
\end{proof}

\begin{figure}[ht!]
\begin{center}
\includegraphics[scale=0.2]{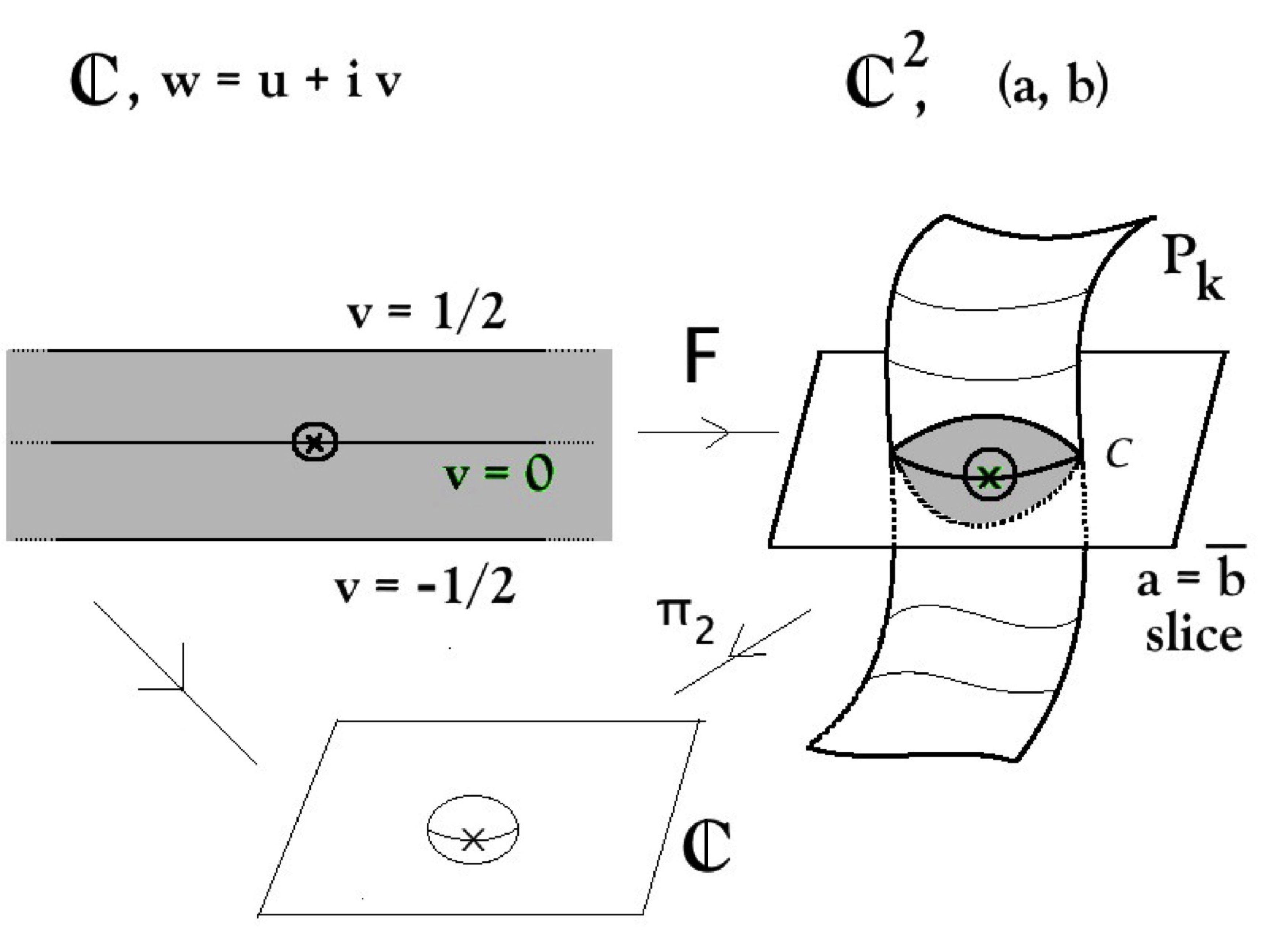}
\end{center}
\caption{$\pi_2 \circ F : w \mapsto b(w)$ is injective in a neighborhood of $\widetilde{u}$ for all but possibly finitely many $\widetilde{u} \in \mathbb{R}$.}
\label{smooth}
\end{figure} 

\begin{remark}
It was possible to construct this larger class of deformation since we worked with the polynomial $f_{c_0}^{\circ 2}$ viewing it as a member of the family $\mathcal{F}_d = \lbrace P_{a,b}(z) = \left(z^d + a \right)^d + b, \hspace{1mm} a, b \in \mathbb{C} \rbrace$. Working in the family $\lbrace f_c\rbrace_{c\in \mathbb{C}}$ of unicritical antiholomorphic polynomials would have only allowed us to construct the real one-dimensional parabolic arcs, as was done in \cite[Theorem 3.2]{MNS}. Indeed, the Beltrami form constructed in the previous lemma is invariant under $f_{c_0}^{\circ 2}$ for all $w\in S = \lbrace w = u + iv \in \mathbb{C} : \vert v \vert < \frac{1}{4} \rbrace$, but it is invariant under $f_{c_0}$ only when $w$ is real. This is because we obtained the quasiconformal deformations via the map $L_w$, which commutes with $z\mapsto \overline{z}+1/2$ only when $w \in \mathbb{R}$.
\end{remark}

\begin{remark}
Various dynamically defined quantities, e.g. the fixed point index, the Ecalle-Voronin coefficients of the parabolic cycle of $P_{a(w),b(w)}$ depend holomorphically on $w\in S$. 
\end{remark}

Recall that the set of singular points of an algebraic curve $\{(a,b)\in\mathbb{C}^2: P(a,b)=0\}$ (where $P$ is a complex polynomial in two variables $a$ and $b$) is defined as
\begin{equation*}
\{(a,b)\in\mathbb{C}^2: P(a,b)=\frac{\partial P}{\partial a}(a,b)=\frac{\partial P}{\partial b}(a,b)=0\}.
\end{equation*}
It is worth mentioning that (by the implicit function theorem) an algebraic curve is locally a manifold near its non-singular points. It is a well-known fact that an affine algebraic curve has at most finitely many singular points.

\begin{proof}[Proof of Theorem \ref{almost non-singular}]
Part (i) follows from Lemma \ref{qcdef}. The required map $\phi$ is given by $b$.

We will show that if $(\overline{c(\widetilde{u})} , c(\widetilde{u}))$ is a non-singular point of the algebraic curve $P_k$, then the map $\mathbb{R} \ni h \mapsto c(h)$ has a non-vanishing derivative at $\widetilde{u}$. Since any affine algebraic curve has at most finitely many singular points, this will complete the proof of the theorem. 

By the definition of non-singularity, one of the partial derivatives $\frac{\partial \Psi}{\partial a}$ or $\frac{\partial \Psi}{\partial b}$ is non-zero at $(\overline{c(\widetilde{u})} , c(\widetilde{u}))$. Let $\frac{\partial \Psi}{\partial a} (\overline{c(\widetilde{u})} , c(\widetilde{u}))\neq 0.$ Then there exists $\epsilon_1 , \epsilon_2 > 0$ and a holomorphic map $g : B(c(\widetilde{u}),\epsilon_1) \rightarrow B(\overline{c(\widetilde{u})},\epsilon_2)$ such that $g(c(\widetilde{u})) = \overline{c(\widetilde{u})}$ and $\Psi(a,b) = 0$ for some $(a,b) \in B(\overline{c(\widetilde{u})},\epsilon_1) \times B(c(\widetilde{u}),\epsilon_2)$ if and only if $a=g(b)$. Therefore, the projection map $\pi_2 : (a,b) \mapsto b$ is an injective holomorphic map on an open neighborhood $U \subset P_k$ (in the subspace topology) of $(\overline{c(\widetilde{u})},c(\widetilde{u}))$. It follows from Lemma \ref{qcdef} that the map $\pi_2 \circ F: w \mapsto b(w)$ is an injective holomorphic map on an open neighborhood $B(\widetilde{u},\epsilon_3) \subset \mathbb{C}$. Hence, it has a non-vanishing derivative at $\widetilde{u}$, i.e. $b'(\widetilde{u}) \neq 0$. 

Writing $b(w) = b(u + iv) = b_1(u+iv) + i b_2(u+iv)$ and $c(u) = c_1(u) + i c_2(u)$, we see that 
\begin{align*}
b'(\widetilde{u})
&= \frac{\partial b_1}{\partial u}(\widetilde{u}) + i \frac{\partial b_2}{\partial u}(\widetilde{u})
\\
&= (c_1)'(\widetilde{u}) + i (c_2)'(\widetilde{u}) \;. \hspace{4mm} [\text{For real} \hspace{1mm} u, b(u) = c(u)] 
\end{align*} 

It follows that $\left(  (c_1)'(\widetilde{u}) , (c_2)'(\widetilde{u}) \right) \neq (0,0)$, i.e. $c'(\widetilde{u}) \neq 0$.
\end{proof}

\begin{remark} 
One can easily compute that the algebraic curve $P_1$ of the family $\mathcal{F}_2$ is non-singular at each point of the parabolic arcs of period $1$ of the tricorn. Hence, the critical Ecalle height parametrization is indeed non-singular for the period $1$ parabolic arcs of the tricorn. However, we conjecture that the critical Ecalle height parametrization of any parabolic arc of $\mathcal{M}_d^*$ is non-singular everywhere.
\end{remark}

\section{Real-Analyticity of Hausdorff Dimension}\label{real-anal}

In this section, we prove real-analyticity of Hausdorff dimension of the Julia sets along the parabolic arcs by applying certain real-analyticity results from \cite{SU} to holomorphic family of parabolic maps constructed in the previous section.

Real-analyticity of Hausdorff dimension of Julia sets of hyperbolic maps is well-known due to the work of Ruelle and Bowen on thermodynamic formalism \cite{Ru,Zi}. The classical theory of thermodynamic formalism was developed (or at least works best) primarily for expanding maps. Since such maps admit Markov partitions, their dynamical properties can be conveniently studied by looking at the associated sub-shifts of finite type. In this setting, the largest eigenvalue of the so-called Ruelle operator is intimately connected with the Hausdorff dimension of the limit set. Moreover, a crucial spectral gap property of the largest eigenvalue and the fact that expanding maps are structurally stable (i.e. the qualitative behavior of the dynamics remains stable under small perturbations) help one show that Hausdorff dimension is a real-analytic function of the parameter.

Recall that a rational map is called parabolic if it has at least one parabolic cycle and every critical point of the map lies in the Fatou set (i.e. is attracted to an attracting or a parabolic cycle). Parabolic maps have a certain weak expansion property that makes it possible to use tools from thermodynamic formalism to investigate the finer fractal properties of their Julia sets. However, in the absence of Markov partitions, such a study requires much heavier machinery. We do not want to delve deep into this topic here, rather we would limit ourselves to describing how the main theorem of \cite{SU} applies to our context, as well as pointing out the salient differences between the hyperbolic and parabolic setting.  

The following concept of radial Julia sets was introduced by Urba{\'n}ski and McMullen \cite{U,Mc}, and its importance stems from the fact that the dynamics at the radial points of a Julia set have a strong expansion property.

\begin{definition}[Radial Julia Set]
A point $z \in J(f)$ is called a radial point if there exists $\delta > 0$ and an infinite sequence of positive integers $\lbrace n_k \rbrace$ such that there exists a univalent inverse branch of $f^{n_k}$ defined on $B(f^{n_k}(z), \delta)$ sending $f^{n_k}(z)$ to $z$ for all $k$. The set of all radial points of $J(f)$ is called the radial Julia set and is denoted as $J_r(f)$. Equivalently, the radial Julia set can be defined as the set of points in $J(f)$ whose $\omega$-limit set non-trivially intersects the complement of the post-critical closure.
\end{definition}

For a radial point $z$, there exists a sequence of iterates $\lbrace f^{\circ n_k}(z) \rbrace$ accumulating at a point $w$ outside the post-critical closure and hence, there exists a sequence of univalent inverse branches of $f^{\circ n_k}$ defined on some $B(w, \epsilon)$ sending $f^{\circ n_k}(z)$ to $z$ for all $k$. Such a sequence necessarily forms a normal family and any limit function must be a constant map (compare \cite[Theorem 9.2.1, Lemma 9.2.2]{B1}). This shows that the sequence of univalent inverse branches of $f^{\circ n_k}$ are eventually contracting; in other words, $\displaystyle \lim_{n_k \rightarrow \infty} (f^{\circ n_k})'(z) = \infty$. 

For parabolic rational maps, one has a rather simple but useful description of the radial Julia set. The following proposition was proved in \cite{DU}, we include the proof largely for the sake of completeness. 

\begin{lemma}\label{radial parabolic}
Let $f$ be a parabolic rational map and let $\Omega$ be the set of all parabolic periodic points of $f$. Then, $J_r(f) = J(f) \setminus \displaystyle  \bigcup_{i=0}^{\infty} f^{-i} (\Omega)$. In particular, $\mathrm{HD}(J_r(f)) = \mathrm{HD}(J(f))$.
\end{lemma}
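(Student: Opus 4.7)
The plan is to prove both set inclusions in the equality $J_r(f) = J(f) \setminus \bigcup_{i\geq 0} f^{-i}(\Omega)$; the Hausdorff dimension statement then follows because $\Omega$ is finite (and $f$ has finite degree), so $\bigcup_i f^{-i}(\Omega)$ is countable and hence of Hausdorff dimension zero. The structural fact that drives the whole argument is $\overline{P(f)} \cap J(f) = \Omega$, where $P(f)$ denotes the post-critical set: since $f$ is parabolic, every critical orbit lies in $F(f)$ and accumulates on either an attracting or a parabolic cycle, so $P(f) \subset F(f)$ and the closure of $P(f)$ can meet $J(f)$ only at the points of $\Omega$.

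For the inclusion $J(f) \setminus \bigcup_{i} f^{-i}(\Omega) \subset J_r(f)$, I first argue that the forward orbit of such a $z$ cannot satisfy $d(f^{\circ n}(z),\Omega) \to 0$. In Leau--Fatou flower coordinates around a parabolic periodic point $p$ of period $q$, the Julia set locally consists of the repelling petals on which the first-return map $f^{\circ q}$ is conjugate to $\zeta \mapsto \zeta + 1$, so any Julia orbit entering a sufficiently small neighborhood of $\Omega$ is pushed out again under further iteration, unless the orbit is eventually absorbed into $\Omega$, which is ruled out by hypothesis. Hence there exist $\delta > 0$ and a subsequence $\{n_k\}$ with $d(f^{\circ n_k}(z),\Omega) \geq \delta$. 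Passing to a further convergent subsequence, its accumulation set $A'$ lies in $J(f) \setminus N_\delta(\Omega)$ and is therefore, by the structural fact above, disjoint from the compact set $\overline{P(f)}$; so some $\delta' > 0$ gives $B(f^{\circ n_k}(z), \delta') \cap \overline{P(f)} = \emptyset$ for all large $k$. Each such ball is simply connected and avoids every critical value of $f^{\circ n_k}$ (all of which lie in $P(f)$), so the monodromy theorem produces a univalent inverse branch of $f^{\circ n_k}$ on this ball sending $f^{\circ n_k}(z)$ back to $z$, showing $z \in J_r(f)$.

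For the reverse inclusion, suppose $z$ is an eventual preimage of $\Omega$, say $f^{\circ m}(z) = p \in \Omega$ with parabolic period $q$, so $(f^{\circ q})'(p) = 1$. The chain rule yields $(f^{\circ n})'(z) = (f^{\circ (n-m)})'(p) \cdot (f^{\circ m})'(z)$ for $n \geq m$, and $(f^{\circ (n-m)})'(p)$ depends only on $(n-m) \bmod q$, hence takes finitely many values; consequently $\{|(f^{\circ n})'(z)|\}$ is bounded. On the other hand, as noted in the paragraph preceding the lemma, any radial point must satisfy $|(f^{\circ n_k})'(z)| \to \infty$ along the subsequence witnessing radiality (the inverse branches form a normal family whose limits are constant, forcing the branches to contract). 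This contradiction gives $z \notin J_r(f)$ and completes the set equality; the dimension equality follows from countability as explained above.

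The delicate step is the local analysis near $\Omega$ ruling out $d(f^{\circ n}(z),\Omega) \to 0$ for Julia points $z$ not in $\bigcup_i f^{-i}(\Omega)$; one must handle orbits that hop between different points of the cycle $\Omega$, which is done by passing to $f^{\circ q}$ and applying the Leau--Fatou flower theorem at each point of $\Omega$ simultaneously, so that the repelling petal dynamics (a translation in Fatou coordinates) forces Julia orbits to leave every sufficiently small neighborhood of $\Omega$.
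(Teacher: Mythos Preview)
Your proof is correct and follows essentially the same strategy as the paper. For the inclusion $J(f)\setminus\bigcup_i f^{-i}(\Omega)\subset J_r(f)$, both arguments use the repelling-petal dynamics (after passing to the first-return map) to show that non-preimage Julia orbits cannot remain trapped near $\Omega$, then invoke $\overline{P(f)}\cap J(f)=\Omega$ together with compactness to produce a uniform radius on which univalent pullbacks exist.

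The one genuine difference is in the easy direction $\bigcup_i f^{-i}(\Omega)\subset J(f)\setminus J_r(f)$. The paper argues directly that every neighborhood of a point of $\Omega$ contains infinitely many post-critical points, so no fixed-radius ball about $f^{\circ n}(z)$ can avoid the critical values of $f^{\circ n}$ for infinitely many $n$, obstructing the required inverse branches. You instead use the derivative criterion: $|(f^{\circ n})'(z)|$ stays bounded along eventual preimages of $\Omega$, while radial points must satisfy $|(f^{\circ n_k})'(z)|\to\infty$ (as recalled in the paragraph preceding the lemma). Both routes are valid; yours is arguably cleaner since it avoids tracking critical values inside the petals.

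One minor slip: for a general parabolic periodic point the multiplier $(f^{\circ q})'(p)$ is only a root of unity, not necessarily $1$, so $(f^{\circ(n-m)})'(p)$ depends on $n-m$ modulo $q$ times the order of the multiplier rather than modulo $q$ alone. This does not affect your conclusion that the set of values is finite and hence bounded.
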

\begin{proof}
For a parabolic rational map, the post-critical closure intersects the Julia set precisely in $\Omega$, which is a finite set consisting of (parabolic) periodic points. Starting with any point of $\displaystyle  \bigcup_{i=0}^{\infty} f^{-i} (\Omega)$, one eventually lands on $\Omega$ under the dynamics and the existence of infinitely many post-critical points in every neighborhood of $\Omega$ obstructs the existence of infinitely many univalent inverse branches. It follows that $\displaystyle  \bigcup_{i=0}^{\infty} f^{-i} (\Omega) \subset J(f) \setminus J_r(f)$.

It remains to prove the reverse inclusion. By passing to an iterate, one can assume that $f^{\circ q} (z_j) = z_j \hspace{1mm} \forall z_j \in \Omega$. By the description of the local dynamics near a parabolic point \cite[\S 10]{M1new}, there is a neighborhood $B(\Omega, \theta^{\prime})$ of the parabolic points that is contained in the union of the attracting and repelling petals. By continuity, there exists a $0< \theta < \theta^{\prime}$ such that $f^{\circ q}(B(z_i, \theta)) \cap B(z_j, \theta) = \emptyset$ for $z_i \neq z_j$ and $z_i, z_j \in \Omega$. We claim that for any $z\in J(f) \setminus \displaystyle  \bigcup_{i=0}^{\infty} f^{-i} (\Omega)$, there exists a sequence of positive integers $\lbrace n_k \rbrace$ such that the sequence $\lbrace f^{\circ qn_k}(z) \rbrace$ lies outside $B(\Omega, \theta)$. Otherwise, there would exist an $n_0 \in \mathbb{N}$ such that $f^{\circ qn}(z) \in B(\Omega, \theta) \hspace{1mm} \forall n > n_0$. Then, there exists some $z_j \in \Omega$ such that $f^{\circ qn}(z) \in B(z_j, \theta) \hspace{1mm} \forall n > n_0$. But since each $f^{\circ qn}(z)$ belongs to the repelling petal, it follows that $f^{\circ qn_0}(z) = z_j$, a contradiction to the assumption that $z \notin \bigcup_{i=0}^{\infty} f^{-i} (\Omega)$. Finally, observe that any point in $J(f) \setminus B(\Omega, \theta)$ has a small neighborhood disjoint from the post-critical set. Since $J(f) \setminus B(\Omega, \theta)$ is a compact metric space, there exists a $\delta >0$ such that the neighborhood $B(w, \delta)$ of any point $w$ of $J(f) \setminus B(\Omega, \theta)$ is disjoint from the post-critical set. Therefore, the balls $B(f^{\circ qn_k}(z), \delta)$ are disjoint from the post-critical closure and hence, there exists a univalent inverse branch of $f^{\circ qn_k}$ defined on $B(f^{\circ qn_k}(z), \delta)$ sending $f^{\circ qn_k}(z)$ to $z$ for all $k$. This proves that $J(f) \setminus \displaystyle  \bigcup_{i=0}^{\infty} f^{-i} (\Omega) \subset J_r(f)$.

The final assertion directly follows as the set $\bigcup_{i=0}^{\infty} f^{-i} (\Omega)$ is countable.
\end{proof}

We should emphasize that so far as Hausdorff dimension is concerned, the previous lemma guarantees that we do not lose anything by restricting our attention to the radial Julia set. 

Conformal measures have played a crucial role in the study of the dimension-theoretic properties of rational maps. It is worth mentioning in this regard that conformal measures satisfy a weak form of Ahlfors-regularity at the radial points of a Julia set. More precisely, by an immediate application of Koebe's distortion theorem and the expansion property at the radial points discussed above, one obtains that for every point $z$ in $J_r(f)$, there exists a sequence of radii $\displaystyle \lbrace r_k(z) \rbrace_{k=1}^{\infty}$ converging to $0$ ($r_k(z) \approx \delta \vert f^{\circ n_k})'(z) \vert^{-1}$) such that if $m$ is a $t$-conformal measure for $f$, then
\begin{align*}
C^{-1} < \frac{m(B(z, r_k(z))}{r_k(z)^t} < C
\end{align*}
for some constant $C$ depending on $\delta$ and $m$.

Before we state the main technical theorem from \cite{SU}, we need a couple of more definitions and facts from thermodynamic formalism. We will assume familiarity with the basic definitions and properties of topological pressure \cite{W}, \cite[\S 9]{W1}. The behavior of the pressure function $t \mapsto P(t)= P(f \vert_{J(f)}, -t \log\vert f' \vert), t \in \mathbb{R}$ (where $P(f \vert_{J(f)}, -t \log\vert f' \vert), t \in \mathbb{R}$ is the topological pressure) has been extensively studied by many people in the context of rational and transcendental maps. For hyperbolic rational maps, the pressure function is strictly decreasing and vanishes at a unique real number. The following result discusses the corresponding situation for parabolic maps.

\begin{theorem}\cite{DU}
\label{pressure_parabolic}
For a parabolic rational map $f$,
\begin{enumerate}
\item The function $t \mapsto P(t)$ is continuous, non-increasing and non-negative.
\item $\exists s > 0$ such that,
\begin{enumerate}
\item  \label{need} $P(t) > 0$ for $t \in \left[0, s \right)$,
\item $P(t) = 0$ for $t \in \left[s, \infty \right)$,
\item $P\vert_{\left[0, s \right]}$ is injective.
\end{enumerate} 
\end{enumerate}
\end{theorem}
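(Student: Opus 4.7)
The statement is the Denker-Urbanski analysis of the pressure function for parabolic rational maps; I would organize a proof around three ingredients: (a) regularity of $P$ (continuity, monotonicity, non-negativity), (b) existence of a finite critical exponent $s$, and (c) strict monotonicity on $[0,s]$.

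For (a), the essential observation is that, by the definition of a parabolic rational map, every critical point of $f$ lies in the Fatou set, so the potential $\psi_t=-t\log|f'|$ is continuous on the compact set $J(f)$: at parabolic periodic points the multiplier is a root of unity, so $\log|f'|=0$ there, and at all other points of $J(f)$ one has $|f'|\in(0,\infty)$. Continuity of the topological pressure with respect to the sup-norm on continuous potentials then yields continuity of $t\mapsto P(t)$. For monotonicity I would invoke the variational principle
\begin{equation*}
P(t)=\sup_\mu\bigl\{h_\mu(f)-t\chi_\mu\bigr\},\qquad \chi_\mu:=\int \log|f'|\,d\mu,
\end{equation*}
taken over $f$-invariant Borel probability measures $\mu$ supported on $J(f)$. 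By Przytycki's theorem $\chi_\mu\geq 0$ for every such $\mu$, so each affine function $t\mapsto h_\mu(f)-t\chi_\mu$ is non-increasing, and hence so is their supremum. Non-negativity follows by plugging in the invariant atomic measure equidistributed on a parabolic periodic cycle, which has $h=0=\chi$ and therefore contributes the value $0$.

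For (b), one has $P(0)=h_{\mathrm{top}}(f\vert_{J(f)})=\log\deg f>0$ by the theorem of Gromov-Lyubich, so combined with monotonicity and non-negativity this forces $s:=\sup\{t\geq 0 : P(t)>0\}$ to be a well-defined positive number (a priori possibly $+\infty$). The heart of the Denker-Urbanski argument is the parabolic Patterson-Sullivan construction, which produces a $t$-conformal measure on $J(f)$ precisely when $t$ equals the exponent of convergence of a suitable Poincare series supported on preimages of a generic point; feeding such a measure back into the variational principle one concludes $P(t)=0$ at that exponent, identifying it with $s$ and forcing $s<\infty$. Monotonicity then extends $P\equiv 0$ to $[s,\infty)$ and keeps $P>0$ on $[0,s)$.

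The main obstacle, and the final ingredient (c), is strict decrease on $[0,s]$. The plan is to produce, for every $t\in(0,s)$, an ergodic equilibrium state $\mu_t$ for $\psi_t$ with strictly positive Lyapunov exponent $\chi_{\mu_t}>0$; convexity of $P$ together with the standard formula $P'_-(t)=-\chi_{\mu_t}$ then gives strict decrease and hence injectivity on $[0,s]$. Positivity of $\chi_{\mu_t}$ is forced because $\mu_t$ has positive entropy $h_{\mu_t}=P(t)+t\chi_{\mu_t}>0$, so it cannot be atomic on a parabolic cycle, and on a parabolic rational map the only invariant measures with vanishing Lyapunov exponent are those supported on the parabolic periodic orbits. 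Constructing $\mu_t$ for $t\in(0,s)$ is the technically demanding step, and it is exactly where the parabolic thermodynamic formalism of \cite{DU}---inducing to a countable-alphabet Markov system on (a full-measure subset of) the radial Julia set where the induced map is uniformly expanding and the classical Ruelle-Perron-Frobenius machinery applies---is genuinely needed.
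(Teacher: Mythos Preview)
The paper does not prove this theorem: it is stated with attribution to \cite{DU} and used as a black box, so there is no in-paper argument to compare your proposal against. Your outline is essentially a sketch of the Denker--Urba\'nski approach and is broadly correct in spirit.

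Two small points worth tightening. First, your remark that ``$\log|f'|=0$'' at parabolic periodic points is not quite right: it is the product of $|f'|$ over the cycle that equals $1$, not each factor separately. What you actually need for continuity of $\psi_t$ on $J(f)$ is only that $f'$ has no zeros there, which follows because all critical points lie in the Fatou set. Second, in part (c) your deduction that $\chi_{\mu_t}>0$ from $h_{\mu_t}>0$ is correct but should be justified by Ruelle's inequality (or the Przytycki--Ledrappier characterization of zero-exponent measures) rather than left as an assertion; the sentence ``$h_{\mu_t}=P(t)+t\chi_{\mu_t}>0$'' alone does not exclude $\chi_{\mu_t}=0$ with $P(t)>0$. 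With these caveats, your sketch lines up with the standard proof in the cited reference.
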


The Hausdorff dimension of the Julia set of a hyperbolic map is equal to the unique zero of the associated pressure function. The next theorem relates the Hausdorff dimension of the Julia set to the the minimal zero of the pressure function and the minimum exponent of $t$-conformal measures for parabolic rational maps. 

\begin{theorem}\cite{DU}
\label{HD_Bowen}
For a parabolic rational map $f$, the following holds:
\begin{align*}
\mathrm{HD}(J(f)) &= \mathrm{inf} \lbrace t \in \mathbb{R} : \exists \hspace{1mm} t-\mathrm{conformal \hspace{1mm} measure \hspace{1mm} for} \hspace{1mm} f\vert_{J(f)} \rbrace \\
&= \mathrm{inf} \lbrace t \in \mathbb{R} : P(t) = 0 \rbrace
\end{align*}
\end{theorem}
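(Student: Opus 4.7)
The plan is to let $s = \inf\{t : P(t) = 0\}$, which is well-defined and positive by Theorem \ref{pressure_parabolic}, and to establish the chain of equalities $\mathrm{HD}(J(f)) = s = \inf\{t \in \mathbb{R} : \exists\ t\text{-conformal measure for } f|_{J(f)}\}$. I would split the argument into the two equalities, handling conformal measures first and Hausdorff dimension second, using Lemma \ref{radial parabolic} to reduce the dimension computation to the radial Julia set where the local dynamics are uniformly expanding.

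For the second equality, I would construct a $t$-conformal measure for each $t \geq s$ via a Patterson--Sullivan type procedure: fix a non-exceptional basepoint $x_0 \in J(f) \setminus \bigcup_{i \geq 0} f^{-i}(\Omega)$ and, for $t$ slightly larger than the critical exponent, consider the normalized atomic probability measures supported on $\bigcup_n f^{-n}(x_0)$ with weights proportional to $e^{-n P(t)} |(f^n)'(y)|^{-t}$. A weak-$*$ accumulation point as $t \downarrow s$ yields an $s$-conformal measure $m_s$ on $J(f)$. Conversely, if a $t$-conformal measure existed for some $t < s$, then iterating the conformality relation along backward orbits and summing would produce a uniform bound on the partition function $\sum_{y \in f^{-n}(x_0)} |(f^n)'(y)|^{-t}$, forcing $P(t) \leq 0$ and contradicting part (2)(a) of Theorem \ref{pressure_parabolic}. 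Hence conformal measures exist precisely for $t \geq s$.

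To obtain $\mathrm{HD}(J(f)) = s$, Lemma \ref{radial parabolic} reduces the task to showing $\mathrm{HD}(J_r(f)) = s$. For the lower bound, at every $z \in J_r(f)$ the univalent inverse branches of $f^{n_k}$ on $B(f^{n_k}(z), \delta)$ combined with the Koebe distortion theorem yield the weak Ahlfors regularity estimate $C^{-1} \leq m_s(B(z, r_k(z)))/r_k(z)^s \leq C$ along a sequence $r_k(z) \downarrow 0$, and the mass distribution principle then gives $\mathrm{HD}(J_r(f)) \geq s$. For the upper bound, for each $t > s$ I would build a cover of $J(f)$ by pulling back a small disk disjoint from the post-critical closure along all branches of $f^{-n}$; since $P(t) = 0$ the partition sums $\sum_{y \in f^{-n}(x_0)} |(f^n)'(y)|^{-t}$ remain uniformly bounded, which, after Koebe-controlling the diameters, produces finite $t$-dimensional Hausdorff content for $J(f)$ and hence $\mathrm{HD}(J(f)) \leq t$.

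The main technical obstacle is the failure of uniform expansion near the parabolic cycle $\Omega$: orbits that linger close to parabolic points have derivative close to $1$, so one cannot directly sum geometrically over backward orbits as in the purely hyperbolic Bowen-formula argument. Both the Patterson--Sullivan construction (which must be prevented from atomizing on the countable grand orbit of $\Omega$) and the upper-bound covering argument require a return-time decomposition to a compact set bounded away from $\Omega$ in order to recover the exponential summability. Lemma \ref{radial parabolic} is tailor-made for circumventing this: the radial Julia set is obtained from $J(f)$ precisely by removing the countable set $\bigcup_i f^{-i}(\Omega)$, so restricting to $J_r(f)$ restores access to uniformly contracting inverse branches while preserving the Hausdorff dimension.
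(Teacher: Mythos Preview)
The paper does not give a proof of this theorem at all: it is stated with the citation \cite{DU} and used as a black box, so there is no ``paper's own proof'' to compare against. Your sketch is a reasonable outline of the Denker--Urba\'nski argument (Patterson--Sullivan construction of an $s$-conformal measure, nonexistence for $t<s$ via the pressure inequality, and the two-sided dimension estimate on $J_r(f)$ through Koebe distortion and covering), and it correctly identifies the parabolic obstruction and the role of Lemma~\ref{radial parabolic}; but since the present paper simply imports the result, there is nothing further to compare.
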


A more elaborate account of these lines of ideas can be found in the expository article of Urba{\'n}ski \cite{DU,U}. 

\begin{definition}
A meromorphic function $f: \mathbb{C} \rightarrow \hat{\mathbb{C}}$ is called tame if its post-singular set does not contain its Julia set. 
\end{definition}

Clearly, parabolic polynomials are tame. For tame rational maps, there exist nice sets \cite{JRL,ND} giving rise to conformal iterated function systems with the property that the Hausdorff dimension of the radial Julia set is equal to the common value of the Hausdorff dimensions of the limit sets of all the iterated function systems induced by all nice sets (compare \cite[\S 2,3]{SU}). One can define the pressure function for these iterated function systems (induced by the nice sets) and the system $S$ is called strongly $N$-regular if there is $t \geq 0$ such that $0 < P_S(t) < +\infty$ and if there is $t \geq 0$ such that $P_S(t)=0$. The fact that the conformal iterated function systems arising from parabolic rational maps satisfy this property, can be proved as in Theorem \ref{pressure_parabolic}.

We are now prepared to state the main result from \cite{SU} which is at the technical heart of our real-analyticity result.

\begin{theorem}\cite[Theorem 1.1]{SU}\label{motor}
Assume that a tame meromorphic function $f : \mathbb{C} \rightarrow \hat{\mathbb{C}}$ is strongly $N$- regular. Let $\Lambda \subset \mathbb{C}^d$ be an open set and let $\lbrace f_{\lambda}\rbrace_{\lambda \in \Lambda}$ be an analytic family of meromorphic functions such that
\begin{enumerate}
\item $f_{\lambda_0} = f$ for some $\lambda_0 \in \Lambda$,

\item there exists a holomorphic motion $H : \Lambda \times J_{\lambda_0} \rightarrow \mathbb{C}$ such that each map $H_{\lambda}$ is a
topological conjugacy between $f_{\lambda_0}$ and $f_{\lambda}$ on $J_{\lambda_0}$.
\end{enumerate} 
Then the map $\Lambda \ni \lambda \mapsto \mathrm{HD}(J_r(f_{\lambda}))$ is real-analytic on some neighborhood of $\lambda_0$.
\end{theorem}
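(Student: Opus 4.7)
The plan is to reduce $\mathrm{HD}(J_r(f_\lambda))$ to the unique zero in $t$ of a pressure function $P_{S_\lambda}(t)$ built from a conformal iterated function system (CIFS) $S_\lambda$ attached to a nice set, show that this pressure is jointly real-analytic in $(\lambda,t)$, and then invoke the implicit function theorem. The setup step is to fix a nice set $V$ for $f = f_{\lambda_0}$ as in \cite{JRL,ND} and to form the first-return CIFS $S = \{\varphi_i\}_{i \in I}$ whose generators are the univalent inverse branches of iterates of $f$ sending $V$ into itself; by \cite[\S 2,3]{SU}, the limit set of $S$ has the same Hausdorff dimension as $J_r(f)$, so $\mathrm{HD}(J_r(f))$ equals the Bowen parameter $t_* := \inf\{t \ge 0 : P_S(t) \le 0\}$, and strong $N$-regularity forces $P_S(t_*) = 0$ with $P_S$ strictly decreasing across $t_*$.

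Next, using the holomorphic motion $H$, I would transport $S$ to a family $\{S_\lambda\}$. The symbol space $I$ and the combinatorial description of each inverse branch are preserved by the topological conjugacy $H_\lambda$, so for each $i$ there is a canonical inverse branch $\varphi_i^{(\lambda)}$ of the corresponding iterate of $f_\lambda$. Since $f_\lambda^{\circ n}$ is holomorphic in $\lambda$ and the critical/singular values move holomorphically, each $\varphi_i^{(\lambda)}$ is holomorphic in $\lambda$ on a holomorphically moving neighborhood, producing an analytic family of CIFS.

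The core analytic step is to study the Ruelle transfer operator
\[ \mathcal{L}_{\lambda,t}\,\psi(z) \;=\; \sum_{i \in I} \bigl|(\varphi_i^{(\lambda)})'(z)\bigr|^{t}\, \psi\bigl(\varphi_i^{(\lambda)}(z)\bigr) \]
on a suitable Banach space of H\"older continuous functions on $V$. For $(\lambda,t)$ near $(\lambda_0,t_*)$ the operator depends jointly real-analytically on $(\lambda,t)$ (holomorphically in $\lambda$, real-analytically in $t$ through $|\cdot|^t$) and admits a simple isolated leading eigenvalue equal to $e^{P_{S_\lambda}(t)}$; Kato's analytic perturbation theorem then gives real-analyticity of this eigenvalue, and hence of $P_{S_\lambda}(t)$, jointly in $(\lambda,t)$. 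Since $\partial_t P_{S_{\lambda_0}}(t_*) < 0$, the implicit function theorem applied to $P_{S_\lambda}(t) = 0$ yields a real-analytic $\lambda \mapsto t(\lambda)$ with $t(\lambda_0) = t_*$, which by Bowen's formula for CIFS coincides with $\mathrm{HD}(J_r(f_\lambda))$ on a neighborhood of $\lambda_0$.

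The hard part is the spectral step: the CIFS coming from a tame meromorphic function is infinite, and near parabolic points the generators fail to be uniformly contracting, so $\mathcal{L}_{\lambda,t}$ does not act on classical Banach spaces of observables in the obvious way. One has to verify summability of $\sum_i \|(\varphi_i^{(\lambda)})'\|_\infty^t$ locally uniformly in $\lambda$, identify a Banach space on which $\mathcal{L}_{\lambda,t}$ is bounded and quasi-compact with simple dominant eigenvalue, and check that the spectral gap persists under perturbation in $\lambda$. This is precisely what strong $N$-regularity together with the careful construction of nice sets in \cite{JRL,ND} is designed to deliver, and it constitutes the bulk of the technical work of \cite{SU}.
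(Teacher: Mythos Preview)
The paper does not prove this theorem at all: it is quoted verbatim from \cite[Theorem~1.1]{SU} and used as a black box in the proof of Theorem~\ref{real-anal HD}. There is therefore no ``paper's own proof'' to compare your proposal against.

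That said, your sketch is a faithful outline of the standard thermodynamic-formalism strategy one expects in \cite{SU}: build a nice-set CIFS capturing $J_r$, transport it by the holomorphic motion, show the Ruelle operator depends real-analytically on $(\lambda,t)$ with a spectral gap, and apply the implicit function theorem to the pressure equation. You have correctly identified the genuine technical obstruction (infinite alphabet, non-uniform contraction near parabolics) and the role of strong $N$-regularity in overcoming it. As a summary of the method this is fine; just be aware that in the present paper the result is invoked, not established.
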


\begin{proof}[Proof of Theorem \ref{real-anal HD}]
Let $\mathcal{C}$ be a parabolic arc and $c: \mathbb{R} \rightarrow \mathcal{C}$ be its critical Ecalle height parametrization. It follows from Lemma \ref{qcdef} that there exists an injective holomorphic map $F: S \rightarrow \mathbb{C}^2, w \mapsto (a(w),b(w))$ and an analytic family of q.c. maps $\displaystyle (\phi_w)_{w \in S}$ with $\phi_w \circ P_{a(0),b(0)} \circ \phi_w^{-1} = P_{a(w),b(w)}$. Setting $\Lambda = S$, and $H = \phi(w,z) := \phi_w(z)$, we see that all the conditions of Theorem \ref{motor} are satisfied and hence, the function $w \mapsto \mathrm{HD}(J_r(P_{a(w),b(w)}))$ is real-analytic. Restricting the map to the reals, we conclude that the map $h\mapsto \mathrm{HD}(J_r(f_{c(h)}))$ is real-analytic. The result now follows from Lemma \ref{radial parabolic}.
\end{proof}  
\begin{remark}
Parabolic curves arise naturally in the study of the parameter spaces of higher degree polynomials and the techniques used in this article can be generalized to prove corresponding statements about the real-analyticity of Hausdorff dimension of the Julia sets on these curves, under suitable conditions. In particular, the real-analyticity of $\mathrm{HD}(J(f))$ continues to hold on regions of parameter spaces where the maps only have attracting or parabolic cycles and such that all the critical points converge to these cycles.
 
The parabolic arcs of the multicorns inherit the real-analyticity property from the connectedness loci of the sub-family $\mathcal{F}_d$ of all polynomials of degree $d^2$.
\end{remark}

\section{$\mathrm{Per}_n(1)$ of Biquadratic Polynomials}\label{per_1_1}

We observed that the parabolic arcs of period $n$ of $\mathcal{M}_d^*$ are naturally embedded in the algebraic curve $P_n$. Moreover, for any $c$ on a parabolic arc of period $n$, the polynomial $P_{\overline{c},c}$ is structurally stable along the curve $P_n$.

The curve $P_n$ is customarily referred to as $\mathrm{Per}_n(1)$ \cite{M3} (since the corresponding maps have a $n$-periodic orbit of multiplier $1$). The topology of these curves plays an important role in the understanding of the parameter spaces of the maps under consideration. In order to analyze the types of singularities of $\mathrm{Per}_n(1)$ and their dynamical meaning, we shall have need for some general notions about singularities of holomorphic function germs.

\subsection{Degenerate Singularities, Morsification, and Milnor Number}

A holomorphic function germ ${\displaystyle f:(\mathbb {C} ^{n},0)\to (\mathbb {C} ,0)}$ is said to be \emph{singular} at a point ${\displaystyle z_{0}\in \mathbb {C} ^{n}}$ if the first order partial derivatives ${\displaystyle \partial f/\partial z_{1},\ldots ,\partial f/\partial z_{n}}$ are all zero at ${\displaystyle z=z_{0}}$. In this subsection, we will only be concerned with isolated singularities; i.e. those singular points $z_0$ that have a small neighborhood ${\displaystyle U\subset \mathbb {C} ^{n}}$ such that ${\displaystyle z_{0}}$ is the only singular point of $f$ in $U$. We say that a point $z_0$ is a \emph{degenerate} singular point of $f$ if ${\displaystyle z_{0}}$ is a singular point and the Hessian matrix of all second order partial derivatives has zero determinant at ${\displaystyle z_{0}}$; i.e.
\begin{equation*}
{\displaystyle \left.\det \left({\frac {\partial ^{2}f}{\partial z_{i}\partial z_{j}}}\right)_{i,j=1,2,\cdots,n}\right\vert_{z=z_0}=0.} \end{equation*}

Otherwise, the singularity is called non-degenerate. Note that non-degenerate singularities are the simplest kind of singularities where the analytic set germ given by the vanishing of $f$ locally looks like the transverse intersection of two non-singular branches. In other words, the analytic set has two distinct tangent planes at a non-degenerate singularity. It is therefore desirable to think of a degenerate singularity of $f$ as the merger of several non-degenerate singularities. Intuitively, if one  perturbs $f$ suitably, then an isolated degenerate singularity of $f$ splits up into a number of non-degenerate isolated singularities. This number, which we will formally define below, measures the complexity of a singularity.

\begin{definition}[Morsification]
A $1$-parameter family of deformations $\{f_t\}$ of a holomorphic function germ $f$ with an isolated singularity is called a \emph{morsification} if for (small) $t\neq 0$ all singularities of $f_t$ are non-degenerate.
\end{definition}

Any function with an isolated singularity admits a morsification (compare \cite[Proposition 6.5.4]{W2}. This leads to the following definition of the Milnor number.

\begin{definition}[Milnor Number]
Let $\{f_t\}$ be a morsification of $f_0= f$, which has an isolated singularity at $0$. Then for small $t$, the total number of non-degenerate singularities of $f_t$ near $0$ is called the \emph{Milnor number} of $f$ at (the isolated singularity) $0$. It is denoted by $\mu(f)$.

Equivalently, one can define the Milnor number algebraically as follows. Let ${\displaystyle {\mathcal {O}}}$ be the ring of holomorphic function germs ${\displaystyle (\mathbb {C} ^{n},0)\to (\mathbb {C} ,0)}$, and ${\displaystyle J_{f}}$ be the Jacobian ideal of $f$:
\begin{equation*}
{\displaystyle J_{f}:=\left\langle {\frac {\partial f}{\partial z_{i}}}:1\leq i\leq n\right\rangle .} 
\end{equation*}
The local algebra of $f$ is then given by:
\begin{equation*}
{\displaystyle {\mathcal {A}}_{f}:={\mathcal {O}}/J_{f}.} 
\end{equation*}
The Milnor number is then equal to the dimension of ${\mathcal {A}}_{f}$ as a complex vector space:
\begin{equation*}
{\displaystyle \mu (f)=\dim _{\mathbb {C} }{\mathcal {A}}_{f}\ .}
\end{equation*}
\end{definition}

It may be instructive to compute the Milnor number of a couple of simple functions germs using the algebraic description.

\textbf{Example 1).} The function $f: \mathbb{C}^2\to\mathbb{C}, f(x,y)=x^2+y^2$ has a singularity at $(0,0)$ with non-vanishing Hessian. Since $\partial f/\partial x=2x$, and $\partial f/\partial y=2y$, the Jacobian ideal $\mathcal{J}_f$ is $\langle 2x,2y\rangle=\langle x,y\rangle$. Hence its local algebra $\mathcal{A}_f$ is given by $\mathcal{O}/\langle x,y\rangle\cong\langle 1\rangle$, which has dimension $1$. Therefore, the Milnor number is $\mu(f)=1$. In fact, any function germ with a non-degenerate singularity has Milnor number $1$.

\textbf{Example 2).} The function $g: \mathbb{C}^2\to\mathbb{C}, g(x,y)=x^2+y^3$ has a singularity at $(0,0)$ with vanishing Hessian. Since $\partial g/\partial x=2x$, and $\partial g/\partial y=3y^2$, the Jacobian ideal $\mathcal{J}_g$ is $\langle 2x,3y^2\rangle=\langle x,y^2\rangle$. Hence its local algebra $\mathcal{A}_g$ is given by $\mathcal{O}/\langle x,y^2\rangle\cong\langle 1,y\rangle$, which has dimension $2$. Therefore, the Milnor number is $\mu(g)=2$. It is not hard to see that under small perturbations, the degenerate singularity of $g$ splits into two distinct non-degenerate singularities.

We refer the readers to \cite[\S 7]{M5}, \cite[\S 6]{W2} for a more comprehensive account on these concepts.

\subsection{Dynamically Defined Morsifications}

With these general tools at our disposal, we now return to the study of the singularities of the curves $\mathrm{Per}_n(1)$. 

For $n=1$ and $d=2$, the curve $\mathrm{Per}_1(1)$ of the family of polynomials $\mathcal{F}_2 = \lbrace P_{a,b}(z) = \left(z^2 + a \right)^2 + b,\ a, b \in \mathbb{C}\rbrace$ has a very simple description. One easily computes that 

\begin{align*}
\mathrm{Per}_1(1)
&= \lbrace (a, b) \in \mathbb{C}^2 : \mathrm{disc}_z \left( P_{a,b}(z) - z \right) = 0 \rbrace
\\
&= \lbrace (a, b) \in \mathbb{C}^2 : h_1(a,b):= 256a^3 + 288ab + 256a^2b^2 + 256b^3 - 27=0\rbrace
\end{align*}

The set of singular points of $\mathrm{Per}_1(1)$ is $\{ (-\frac{3}{4}$, $-\frac{3}{4})$, $(-\frac{3}{4}\omega$, $-\frac{3}{4}\omega^2),$ $(-\frac{3}{4}\omega^2$, $-\frac{3}{4}\omega)\}$, where $\omega$ is a primitive third root of unity. Note that these points correspond precisely to the cusps at the ends of the parabolic arcs of period $1$ of the tricorn. In fact, a simple calculation shows that 
\begin{center}
$\frac{\partial h_1}{\partial a}\vert_{\left(-\frac{3}{4},-\frac{3}{4}\right)}=0,\ \frac{\partial h_1}{\partial b}\vert_{\left(-\frac{3}{4},-\frac{3}{4}\right)}=0,$
\\ $\frac{\partial^2 h_1}{\partial a^2}\vert_{\left(-\frac{3}{4},-\frac{3}{4}\right)}\frac{\partial^2 h_1}{\partial b^2}\vert_{\left(-\frac{3}{4},-\frac{3}{4}\right)}- \left(\frac{\partial^2 h_1}{\partial a \partial b}\vert_{\left(-\frac{3}{4},-\frac{3}{4}\right)}\right)^2=0$.
\end{center}

The same is true for the other singular points. In particular, each singular point of $\mathrm{Per}_1(1)$ is an ordinary cusp point (i.e. a cusp of the form $x^2+y^3=0$ at $(0,0)$). By the classical degree-genus formula for singular curves (compare \cite[Theorem 7.37]{K1}), it follows that the genus of $\mathrm{Per}_1(1)$ is $0$. Hence, after desingularization (and compactification), $\mathrm{Per}_1(1)$ of the family $\mathcal{F}_2$ is the Riemann sphere $\hat{\mathbb{C}}$. 

It will be useful to consider a family of dynamically defined deformations of the function $h_1$ above so that the perturbed functions only have non-singular singularities. To be precise, we will look at the curve $\mathrm{Per}_1(r)$, which consists of parameters $(a,b)$ such that $P_{a,b}$ has a fixed point of multiplier $r$. In other words, $(a,b)$ lies on $\mathrm{Per}_1(r)$ if the polynomials $(P_{a,b}(z) - z)$ and $(P_{a,b})'(z) - r)$ have a common root. This allows us to define the algebraic curve $\mathrm{Per}_1(r)$ in terms of (sub-)resultants as
\begin{align*}
\mathrm{Per}_1(r)
&:= \lbrace (a, b) \in \mathbb{C}^2 : \textrm{sRes}_0 \left( P_{a,b}(z) - z, (P_{a,b})'(z) - r \right) = 0 \rbrace
\\
&= \lbrace (a, b) \in \mathbb{C}^2 : h_r(a,b):= 256 a^3 + 256 a b + 256 a^2 b^2 + 256 b^3 
\\
&+32ab(2r-r^2)+(r^4-12r^3+ 48 r^2 -64r)=0\rbrace
\end{align*}
 
Let us discuss the singularities of $h_r$ (with $r\in (1-\epsilon,1)$, $\epsilon>0$ sufficiently small) near $\left(-\frac{3}{4},-\frac{3}{4}\right)$. Each $h_r$ has two non-degenerate critical points close to $\left(-\frac{3}{4},-\frac{3}{4}\right)$. Hence, the deformation $\{h_r\}$ provides a morsification of the degenerate singularity $\left(-\frac{3}{4},-\frac{3}{4}\right)$ of $h_1$ (compare Figure \ref{morsification}). Observe that this is in consonance with the fact that $\left(-\frac{3}{4},-\frac{3}{4}\right)$ is a degenerate singularity of $h_1$ with Milnor number $2$.

One of these two critical points $(\overline{\alpha_r}, \alpha_r)=(\frac{r-4}{4}, \frac{r-4}{4})$ of $h_r$ lies on $\mathrm{Per}_1(r)$. In fact, $(\overline{\alpha_r}, \alpha_r)$ belongs to a period $1$ hyperbolic component (bifurcating from the principal hyperbolic component at $\left(-\frac{3}{4},-\frac{3}{4}\right)$) of $\mathcal{F}_2$. Therefore, $h_r(\overline{\alpha_r}, \alpha_r)=0$, and its Milnor number is $\mu(h_r,(\overline{\alpha_r}, \alpha_r))=1$. One can dynamically explain the existence of the singular point $(\overline{\alpha_r}, \alpha_r)$ of  $\mathrm{Per}_1(r)$ as follows: the map $P_{\overline{\alpha_r}, \alpha_r}$ has two distinct fixed points of multiplier $r$, and hence $(\overline{\alpha_r}, \alpha_r)$ is a point of transverse intersection of two smooth local branches (each defined by the condition that $P_{a,b}$ has a fixed point of multiplier $r$) of $\mathrm{Per}_1(r)$.

\begin{figure}[ht!]
\begin{center}
\includegraphics[scale=0.75]{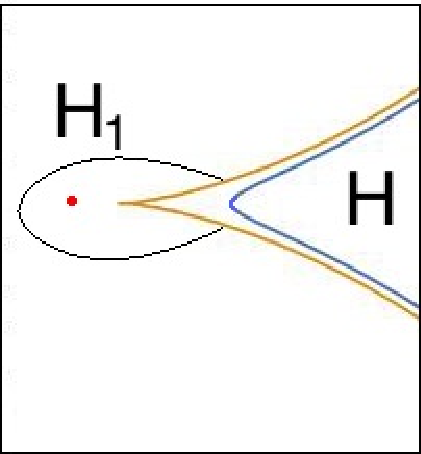}
\end{center}
\caption{The outer yellow curve indicates part of $\mathrm{Per}_1(1)\cap \lbrace a=\overline{b}\rbrace$, and the inner blue curve (along with the red point) indicates part of the deformation $\mathrm{Per}_1(r)\cap \lbrace a=\overline{b}\rbrace$ for some $r\in (1-\epsilon,1)$. The cusp point $c_0$ on the yellow curve is a critical point of $h_1$, i.e. a singular point of $\mathrm{Per}_1(1)$, and the red point is a critical point of $h_r$; i.e a singular point of $\mathrm{Per}_1(r)$.}
\label{morsification}
\end{figure}
Using these information on the family $\mathrm{Per}_1(r)$, we will now show that the cusp points of period $n$ (recall Definition \ref{DefCusp}), lying on the boundaries of hyperbolic components of odd period $n$ of the tricorn, are singular points (with at least a double tangent) of $\mathrm{Per}_n(1)$ of the family $\mathcal{F}_2$. 

\begin{proposition}
Let $\mathcal{C}$ be a parabolic arc of odd period $n$ of $\mathcal{M}_2^*$, and $c_0$ be a double parabolic point (cusp point) at the end of $\mathcal{C}$. Then $(\overline{c_0},c_0)$ is a singular point (with at least a double tangent) of $\mathrm{Per}_n(1)$ of $\mathcal{F}_2$.
\end{proposition}

\begin{proof}
Recall that the set of singular points of $\mathrm{Per}_1(1)$ is $\{ (-\frac{3}{4}$, $-\frac{3}{4})$, $(-\frac{3}{4}\omega$, $-\frac{3}{4}\omega^2),$ $(-\frac{3}{4}\omega^2$, $-\frac{3}{4}\omega)\}$ (where $\omega$ is a primitive third root of unity), and these are precisely the cusp points of period $1$. We will denote the principal hyperbolic component of the family $\mathcal{F}_2$ by $H$, and the hyperbolic component (of $\mathcal{F}_2$) of period $1$ that bifurcates from $H$ at the parameter $\left(-\frac{3}{4},-\frac{3}{4}\right)$ by $H_1$. 

Let $\widetilde{H}$ be the hyperbolic component of period $n$ of $\mathcal{F}_2$ with $\mathcal{C} \subset \partial \widetilde{H}$, and $\widetilde{H}_1$ be the hyperbolic component of period $n$ of $\mathcal{F}_2$ that bifurcates from $\widetilde{H}$ at $(\overline{c_0},c_0)$. We again consider the $1$-parameter family of deformations $\mathrm{Per}_n(r)$ (the curve of parameters with an $n$-periodic cycle of multiplier $r$) of $\mathrm{Per}_n(1)$. More precisely, we define $\mathrm{Per}_n(r):=\lbrace (a,b)\in \mathbb{C}^2 : \widetilde{h}_r(a,b)=0\rbrace$, where $\widetilde{h}_r(a,b)$ is the square-free part of $\textrm{sRes}_0(P_{a,b}^{\circ n}(z)-z, (P_{a,b}^{\circ n})'(z)-r)$.

By \cite[Theorem C]{IK}, the straightening map induces biholomorphisms $\chi : \widetilde{H} \to H$ and $\chi :\widetilde{H}_1\to H_1$, and $\chi(\overline{c_0},c_0)= \left(-\frac{3}{4},-\frac{3}{4}\right)$ (this is no loss of generality as one of the maps $\chi$, $\omega \chi$ or $\omega^2 \chi$ satisfies this property). Since $r<1$, $\chi$ sends a polynomial with an attracting $n$-cycle of multiplier $r$ to a polynomial with an attracting fixed point of multiplier $r$; i.e. $\chi(\widetilde{H}_1\cap \mathrm{Per}_n(r))= H_1\cap \mathrm{Per}_1(r)$. Therefore, 
\begin{align*}
\widetilde{H}_1\cap \mathrm{Per}_n(r)
&= \chi^{-1}(H_1\cap \mathrm{Per}_1(r))
\\
&= \chi^{-1}(\lbrace (a,b)\in H_1 : h_r(a,b)=0\rbrace)
\\
&= \lbrace\chi^{-1}(a,b)\in \widetilde{H}_1 : h_r(a,b)=0\rbrace
\\
&= \lbrace (a,b)\in \widetilde{H}_1 : h_r\circ\chi(a,b)=0\rbrace .
\end{align*}

Clearly, $\chi^{-1}(\overline{\alpha_r}, \alpha_r)$ lies on the curve $\mathrm{Per}_n(r)$; i.e. $\widetilde{h}_r(\chi^{-1}(\overline{\alpha_r}, \alpha_r))=0$. We claim that $\chi^{-1}(\overline{\alpha_r}, \alpha_r)$ is a non-degenerate critical point of $\widetilde{h}_r$, for each $r\in (1-\epsilon,1)$. Recall that $(\overline{\alpha_r}, \alpha_r)\to \left(-\frac{3}{4},-\frac{3}{4}\right)$ as $r\to 1$, hence $\chi^{-1}(\overline{\alpha_r}, \alpha_r) \to \chi^{-1}\left(-\frac{3}{4},-\frac{3}{4}\right)=(\overline{c_0},c_0)$ as $r\to 1$ (by \cite[Theorem 6.3]{IM1}, the straightening map $\chi$ extends as a homeomorphism from the closure of $\widetilde{H}$ onto the closure of $H$). Since $\widetilde{h}_r$ (for $\epsilon>0$ small enough) is an arbitrarily small perturbation of $\widetilde{h}_1$, this claim implies that any small perturbation $\widetilde{h}_r$ of $\widetilde{h}_1$ has at least one non-degenerate critical point near $(\overline{c_0},c_0)$. Hence, by definition, the Milnor number $\mu(\widetilde{h}_1, (\overline{c_0},c_0))$ is at least $1$ (compare \cite[Lemma 6.5.5]{W2}); i.e. $(\overline{c_0},c_0)$ is a singular point of $\mathrm{Per}_n(1)$.

We will now prove the claim that $\chi^{-1}(\overline{\alpha_r}, \alpha_r)$ is a non-degenerate critical point of $\widetilde{h}_r$; i.e. it has Milnor number $\mu(\widetilde{h}_r,\chi^{-1}(\overline{\alpha_r}, \alpha_r))= 1$. Note that since $(\overline{\alpha_r}, \alpha_r)$ ($\in H_1$) is a non-degenerate critical point of $h_r$ and $\chi$ is a biholomorphism from $\widetilde{H}_1$ onto $H_1$, it follows (by a simple computation using chain rule) that $\chi^{-1}(\overline{\alpha_r}, \alpha_r)$ ($\in \widetilde{H}_1$) is a non-degenerate critical point of $h_r\circ\chi$. Therefore, the Milnor number $\mu(h_r\circ\chi,\chi^{-1}(\overline{\alpha_r}, \alpha_r))= 1$. It now suffices to look at the relation between $\widetilde{h}_r$ and $h_r\circ\chi$ locally near $\chi^{-1}(\overline{\alpha_r}, \alpha_r)$. This is a routine exercise in analytic geometry, we work out the details for the sake of completeness. Consider the ring $\mathcal{O}_{2,\chi^{-1}(\overline{\alpha_r}, \alpha_r)}$ of germs of holomorphic functions (in two variables) defined in some neighborhood of $\chi^{-1}(\overline{\alpha_r}, \alpha_r)$. Since $\widetilde{h}_r$ and $h_r\circ\chi$ are both square-free as elements of $\mathcal{O}_{2,\chi^{-1}(\overline{\alpha_r}, \alpha_r)}$ and their vanishing define the same analytic set germ near $\chi^{-1}(\overline{\alpha_r}, \alpha_r)$ (namely the germ of $\mathrm{Per}_n(r)$ at $\chi^{-1}(\overline{\alpha_r}, \alpha_r)$), it follows that there exists an invertible element $u \in \mathcal{O}_{2,\chi^{-1}(\overline{\alpha_r}, \alpha_r)}$ such that $\widetilde{h}_r=u(h_r\circ\chi)$ as elements of $\mathcal{O}_{2,\chi^{-1}(\overline{\alpha_r}, \alpha_r)}$. But then, their Milnor numbers are equal; i.e. $\mu(h_r\circ\chi,\chi^{-1}(\overline{\alpha_r}, \alpha_r))= 1=\mu(\widetilde{h}_r,\chi^{-1}(\overline{\alpha_r}, \alpha_r))$. 

To conclude the proof, we need to justify that $\mathrm{Per}_n(1)$ has a double tangent at $(\overline{c_0},c_0)$. A direct computation shows that the two distinct tangent lines of $\mathrm{Per}_1(r)$ at $(\overline{\alpha_r}, \alpha_r)$ tend to coincide as $r$ tends to $1$ (in fact, they both tend to $a=b$, which is a double tangent of $\mathrm{Per}_1(1)$ at $(-\frac{3}{4},\frac{3}{4})$). This property is preserved by the biholomorphism $\chi$. Hence the two distinct tangent lines of $\mathrm{Per}_n(r)$ at $\chi^{-1}(\overline{\alpha_r}, \alpha_r)$ tend to coincide as $r$ tends to $1$, and they form a double tangent line of $\mathrm{Per}_n(1)$ at $(\overline{c_0},c_0)$.
\end{proof}

On the other hand, we observed that the curve $\mathrm{Per}_1(1)$ is non-singular everywhere along the parabolic arcs of period $1$; i.e. $(dh_1)(\overline{c},c)\neq 0\ \forall c \in \mathcal{C}$. Consequently, for $r\in (1-\epsilon,1)$, the dynamically defined deformations $h_r$ have no critical points near $\mathcal{C}$. Since this property is preserved under the straightening map $\chi$, and since the curve germs $\chi^{-1}(\mathrm{Per}_1(r))$ form a $1$-parameter family of deformations of $\mathrm{Per}_n(1)$, we have the following proposition:

\begin{proposition}
Let $\mathcal{C}$ be a parabolic arc of odd period $n$ of $\mathcal{M}_2^*$, and $c: \mathbb{R}\to \mathcal{C}$ be its critical Ecalle height parametrization. Then for each $c \in \mathcal{C}$, $(\overline{c},c)$ is a non-singular point of $\mathrm{Per}_n(1)$. In particular, the critical Ecalle height parametrization of $\mathcal{C}$ has a non-vanishing derivative at all points; i.e. $c'(t)\neq 0$ for every $t\in \mathbb{R}$.
\end{proposition}

\section{Singularities of $\mathrm{Per}_n(1)$: Some Examples}\label{sing_per_n_1}
In this section, we will take a closer look at the algebraic sets $\mathrm{Per}_n(1)$ that appeared earlier in the article. The singular locus of these algebraic sets are important in understanding their topology. Indeed, the local topology of a curve near a singular point is completely determined by its equisingularity class. In particular, one can associate a link $K$ with a plane curve singularity such that two equisingular curves have isotopic links, and the curve is locally homeomorphic to the cone on $K$ \cite[Theorem 5.5.9, Lemma 5.2.1]{W2}

\begin{definition}
Let $\mathcal{F}=\lbrace f_{a_1, a_2, \cdots, a_m}\rbrace_{(a_1, a_2, \cdots, a_m) \in \mathbb{C}^m}$ be a holomorphic family of holomorphic polynomials of degree $d\geq 2$, depending algebraically on parameters $(a_1, a_2,$ $\cdots, a_m)\in \mathbb{C}^m$. The algebraic set $\mathrm{Per}_n(1)$ is defined as the set of parameters in $\mathbb{C}^m$ such that $f_{a_1, a_2, \cdots, a_m}$ has a parabolic cycle of period $n$ and multiplier $1$. In algebraic terms, 
\begin{center}
$\mathrm{Per}_n(1):=\lbrace (a_1, a_2, \cdots, a_m) \in \mathbb{C}^m: \mathrm{disc}_z(f_{a_1, a_2, \cdots, a_m}^{\circ n}(z)-z)=0\rbrace$,
\end{center}
\end{definition}

Observe that our definition does not ensure that for every $(a_1, a_2, \cdots, a_m)\in \mathrm{Per}_n(1)$, the corresponding polynomial $f_{a_1, a_2, \cdots, a_m}$ has a periodic orbit of \emph{exact} period $n$ with multiplier $1$. Indeed, our definition allows $\mathrm{Per}_n(1)$ to contain all parameters such that the corresponding polynomials possess a $n'$-periodic orbit (where $n'\vert n$) with multiplier a $n/n'$-th root of unity. However, parameters having a $n'$-periodic orbit (where $n'\vert n$) with multiplier a $n/n'$-th root of unity determine a polynomial that divides $\mathrm{disc}_z \left( f_{a_1, a_2, \cdots, a_m}^{\circ n}(z) - z \right)$, and one can factor them out from $\mathrm{disc}_z \left( f_{a_1, a_2, \cdots, a_m}^{\circ n}(z) - z \right)$ to obtain an algebraic curve consisting precisely of those maps with an $n$-cycle with multiplier $1$. However, we assure the readers that this ambiguity in defining $\mathrm{Per}_n(1)$ will not be of importance to us since we will mostly be interested in local properties (e.g. singularity) that are not affected by the additional components of $\mathrm{Per}_n(1)$.

In what follows, we will look at some more examples of $\mathrm{Per}_n(1)$ (in various families of polynomials), and will try to understand the nature of their singularities as well as the `dynamical' behavior of these singular parameters. It is not our aim to prove any precise theorem here, we only intend to investigate some natural examples and to give heuristic explanations of the phenomena, which should pave the way for a more general understanding of the topology of these algebraic sets.

1. We first consider families $\mathcal{F}$ such that $m=2$, a generic $f_{a_1, a_2}$ has two infinite critical orbits, and $\mathrm{Per}_n(1)$ is a complex curve. For any $(a_1, a_2) \in \mathrm{Per}_n(1)$, we can write $f_{a_1, a_2}^{\circ n}$ in appropriate local coordinates in a neighborhood of a parabolic periodic point as $f_{a_1, a_2}^{\circ n}(z)=z+z^{k+1}+O(\vert z\vert^{k+2})$. Such a parabolic point has $k$ petals and each petal attracts at least one infinite critical orbit. Since every map in our family has at most $2$ infinite critical orbits, it follows that $k\in \lbrace 1,2\rbrace$. Furthermore, any $f_{a_1, a_2}$ has at most two disjoint parabolic cycles (since there are at most two infinite critical orbits). We, therefore, have to consider the following three cases.

(a) (Unique parabolic cycle with $k=1$: non-singular point). Let $U$ be a sufficiently small neighborhood of $(\widetilde{a_1}, \widetilde{a_2})$ in $\mathbb{C}^2$. The double root of $f_{\widetilde{a_1}, \widetilde{a_2}}^{\circ n}-\mathrm{id}$ splits into two simple roots in $U\setminus \mathrm{Per}_n(1)$. On a double cover $B$ over $U$, ramified only over $U\cap\mathrm{Per}_n(1)$, we can follow these two simple periodic points holomorphically as functions $z_1$ and $z_2$. In $B$, $U\cap\mathrm{Per}_n(1)$ corresponds to the smooth (complex $1$-dimensional) analytic set $\lbrace z_1=z_2\rbrace$ (some care is needed here; if $(\widetilde{a},\widetilde{b})$ are the local coordinates on $B$, one needs to show that the map $(\widetilde{a},\widetilde{b})\mapsto (z_1(\widetilde{a},\widetilde{b}),z_2(\widetilde{a},\widetilde{b}))$ is a local biholomorphism). This suggests that $(\widetilde{a_1}, \widetilde{a_2})$ is a non-singular point of $\mathrm{Per}_n(1)$.

Alternatively, in a sufficiently small neighborhood $U$ of $(\widetilde{a_1}, \widetilde{a_2})$ in $\mathbb{C}^2$, we have $f_{a_1, a_2}^{\circ n}(z)-z=(z^2+2p(a_1, a_2)z+q(a_1, a_2))g_{a_1, a_2}(z)$, where $p(a_1, a_2), q(a_1, a_2)$ are holomorphic in $(a_1, a_2)$, and $g_{a_1, a_2}$ is non-vanishing. Hence, $U\cap\mathrm{Per}_n(1)=\lbrace(a_1, a_2)\in U: \mathrm{disc}_z (z^2+2p(a_1, a_2)z+q(a_1, a_2))=p(a_1, a_2)^2-q(a_1, a_2)=0\rbrace$. In a generic situation, $(p(a_1, a_2),q(a_1, a_2))$ can be taken as local coordinates on $U$, and locally near $(\widetilde{a_1}, \widetilde{a_2})$, $U\cap\mathrm{Per}_n(1)$ will  look like $u^2=v$ at $(0,0)$. Hence, $(\widetilde{a_1}, \widetilde{a_2})$ would be a non-singular point of $\mathrm{Per}_n(1)$.

(b) (Unique parabolic cycle with $k=2$: ordinary cusp). Let $U$ be a sufficiently small neighborhood of $(\widetilde{a_1}, \widetilde{a_2})$ in $\mathbb{C}^2$. The triple root of $f_{\widetilde{a_1}, \widetilde{a_2}}^{\circ n}-\mathrm{id}$ splits into three simple roots in $U\setminus \mathrm{Per}_n(1)$. On a triple cover $B$ over $U$, ramified only over $U\cap\mathrm{Per}_n(1)$, we can follow these three simple periodic points holomorphically as functions $z_1$, $z_2$ and $z_3$. In $B$, $U\cap\mathrm{Per}_n(1)$ corresponds to the complex $1$-dimensional analytic set $\lbrace z_1=z_2\rbrace\cup\lbrace z_1=z_3\rbrace\cup\lbrace z_3=z_2\rbrace$, and the parameter $(\widetilde{a_1}, \widetilde{a_2})$ corresponds to $\lbrace z_1=z_2=z_3\rbrace$. This analytic set is not regular at $\lbrace z_1=z_2=z_3\rbrace$: two of the three conditions $\lbrace z_i=z_j\rbrace$ determine the same curve, and hence, the analytic set $\lbrace z_1=z_2\rbrace\cup\lbrace z_1=z_3\rbrace\cup\lbrace z_3=z_2\rbrace$ has two coincident tangent lines (or a tangent line of multiplicity $2$) at $\lbrace z_1=z_2=z_3\rbrace$. This suggests that $(\widetilde{a_1}, \widetilde{a_2})$ is an ordinary cusp singularity of $\mathrm{Per}_n(1)$.

Alternatively, in a sufficiently small neighborhood $U$ of $(\widetilde{a_1}, \widetilde{a_2})$ in $\mathbb{C}^2$, we have $f_{a_1, a_2}^{\circ n}(z)-z=(z^3-3p(a_1, a_2)z+2q(a_1, a_2))g_{a_1, a_2}(z)$, where $p(a_1, a_2), q(a_1, a_2)$ are holomorphic in $(a_1, a_2)$, and $g_{a_1, a_2}$ is non-vanishing. Hence, $U\cap\mathrm{Per}_n(1)=\lbrace(a_1, a_2)\in U: \mathrm{disc}_z (z^3+3p(a_1, a_2)z+2q(a_1, a_2))=q(a_1, a_2)^2-p(a_1, a_2)^3=0\rbrace$. In a generic situation, $(p(a_1, a_2),q(a_1, a_2))$ can be taken as local coordinates on $U$, and  $U\cap\mathrm{Per}_n(1)$ will have a singularity of the
form $u^2=v^3$ at $(\widetilde{a_1}, \widetilde{a_2})$.

(c) (Two parabolic cycles each with $k=1$: ordinary double point). Let $U$ be a sufficiently small neighborhood of $(\widetilde{a_1}, \widetilde{a_2})$ in $\mathbb{C}^2$. Then $f_{\widetilde{a_1}, \widetilde{a_2}}^{\circ n}-\mathrm{id}$ has two (distinct) double roots. When $(\widetilde{a_1}, \widetilde{a_2})$ is perturbed in $U\setminus \mathrm{Per}_n(1)$, these two double roots split into two pairs of simple roots. On a double cover $B$ over $U$, ramified only over $U\cap\mathrm{Per}_n(1)$, we can follow these two pairs of simple periodic points as two pairs of holomorphic functions $\lbrace z_1, z_2\rbrace$ and $\lbrace z_3, z_4\rbrace$. In $B$, $U\cap\mathrm{Per}_n(1)$ corresponds to the complex $1$-dimensional analytic set $\lbrace z_1=z_2\rbrace\cup\lbrace z_3=z_4\rbrace$, and the parameter $(\widetilde{a_1}, \widetilde{a_2})$ corresponds to the point of self-intersection $\lbrace z_1=z_2\rbrace\cap\lbrace z_3=z_4\rbrace$. In other words, $U\cap\mathrm{Per}_n(1)$ corresponds to the union of two (different) branches given by $\lbrace z_1=z_2\rbrace$ and $\lbrace z_3=z_4\rbrace$, and $(\widetilde{a_1}, \widetilde{a_2})$ corresponds to the point where these two branches intersect (transversally). Therefore, $(\widetilde{a_1}, \widetilde{a_2})$ is an ordinary double point (with non-vanishing Hessian and two distinct tangent lines) of $\mathrm{Per}_n(1)$.

\textbf{E}xamples. 
i) In Subsection \ref{per_1_1}, we discussed the properties of $\mathrm{Per}_1(1)$ of the family $\lbrace (z^2+a)^2+b\rbrace_{a,b \in \mathbb{C}}$: the only singularities of this algebraic curve correspond to the parameters with double parabolic points, and each of these singularities is of the form $x^2-y^3$ at $(0,0)$. Every other point of this curve is non-singular.

ii) Let us look at the family of monic centered cubic polynomials $\mathcal{F}:=\lbrace f_{a,b}(z)= z^3 - 3az + b,\ a,b \in \mathbb{C}\rbrace$. For this family, we have
\begin{align*}
\mathrm{Per}_1(1)
&=\lbrace (a,b)\in \mathbb{C}^2: \mathrm{disc}_z(f_{a,b}(z)-z)=0\rbrace
\\
&= \lbrace (a,b)\in \mathbb{C}^2: 4 + 36 a + 108 a^2 + 108 a^3 - 27 b^2=0\rbrace
\end{align*}

Once again, the only singular point of this curve is $(-\frac{1}{3},0)$, which is the only double parabolic point of $\mathcal{F}$. In fact, $(-\frac{1}{3},0)$ is a double point of $\mathrm{Per}_1(1)$ with vanishing Hessian. To better visualize this singularity, we may consider the following dynamically defined morsification. The algebraic curve consisting of all parameters $(a,b)$ for which $f_{a,b}$ has a fixed point of multiplier $r$ is referred to as $\mathrm{Per}_1(r)$. More precisely, for $r \in \mathbb{C}$, we define
\begin{eqnarray*}
h_r(a,b) &:=& \mathrm{Res}_z(f_{a,b}(z)-z, f_{a,b}'(z)-r), \\
&=& 108 a^3 + 108 a^2  - 27 b^2 -9a(r-3)(r+1)+ r(r-3)^2,\\
\mathrm{Per}_1(r) &:=& \lbrace (a,b)\in \mathbb{C}^2: h_r(a,b)=0\rbrace ,
\end{eqnarray*}
where $\mathrm{Res}_z(g_1, g_2)$ of two polynomial $g_1$ and $g_2$ in one complex variable denotes the resultant of $g_1$ and $g_2$.

Then $h_r$ is a morsification of $h_1$ in the sense that for $r\neq 1$, $h_r$ has two non-degenerate (i.e. with non-vanishing Hessian) critical points at $\left(\frac{r-3}{6}, 0\right)$ and $\left(-\frac{r+1}{6}, 0\right)$. Let us try to understand the formation of the cusp-type singularity of $h_1$ at $(-1/3,0)$ in the following dynamical fashion. For $(a_1,b_1)=\left(\frac{r-3}{6}, 0\right)$, $f_{a_1,b_1}$ has two distinct fixed points of multiplier $r$. Hence, $\left(\frac{r-3}{6}, 0\right)$ is the point of intersection of two transversal branches of $\mathrm{Per}_1(r)$, and is a node-type singular point of $\mathrm{Per}_1(r)$ (i.e. a double point with two distinct tangent lines). At the other critical point $(a_2,b_2)=\left(-\frac{r+1}{6}, 0\right)$ of $h_r$, $f_{a_2,b_2}$ has two distinct fixed points of multiplier $(2-r)$. As $r\to 1$, these two critical points (each having Milnor number $1$) of $h_r$ coalesce to form the critical point $(-1/3,0)$ (with Milnor number $2$) of $h_1$, and all the fixed points coalesce to form a triple fixed point of $f_{-1/3,0}$. 

iii) We now consider $\mathrm{Per}_1(1)$ of the family $\lbrace (z^3+a)^3+b\rbrace_{a,b \in \mathbb{C}}$. The parameter $(a_0,b_0) \approx (0.7698+0.7698i, 0.7698-0.7698i)$ is a co-root of a hyperbolic component of period $2$ of $\mathcal{M}_3^*$. The polynomial $(z^3+a_0)^3+b_0$ has two simple parabolic fixed points, and $(a_0,b_0)$ is an ordinary double point (with non-vanishing Hessian and two distinct tangent lines) of $\mathrm{Per}_1(1)$ of this family.

The local topology of $\mathrm{Per}_n(1)$ near these singularities can be studied via the singularity link and the Milnor fibration (compare \cite{M5}). In fact, in the first two examples above, $\mathrm{Per}_n(1)$ is locally (near the singularity) a cone over the trefoil knot. In the third example, $\mathrm{Per}_n(1)$ is locally a cone over the Hopf link.

2. We now look at families with $m\geq 3$ such that a generic $f_{a_1, a_2, a_3}$ has three infinite critical orbits. For simplicity, we will restrict ourselves to the case $m=3$ (this case is already more complicated than the case where $m=2$). As above, for any $(a_1, a_2, a_3) \in \mathrm{Per}_n(1)$, we can write $f_{a_1, a_2, a_3}^{\circ n}$ in appropriate local coordinates in a neighborhood of a parabolic point as $f_{a_1, a_2, a_3}^{\circ n}(z)=z+z^{k+1}+O(\vert z\vert^{k+2})$ with $k\in \lbrace 1,2,3\rbrace$. For ease of exposition, let us work with the family $\mathcal{F}:=\lbrace f_{a,b,c}(z)= z^4 + az^2 + bz + c,\ a,b,c \in \mathbb{C}\rbrace$ of monic centered quartic polynomials. Then, 
\begin{align*}
\mathrm{Per}_1(1)
&=\lbrace (a,b,c)\in \mathbb{C}^3: \mathrm{disc}_z(z^4 + az^2 + bz + c-z)=0\rbrace
\\
&= \lbrace (a,b,c)\in \mathbb{C}^3: -27 - 4 a^3 + 108 b + 8 a^3 b 
\\
&- 162 b^2 - 4 a^3 b^2
+ 108 b^3 - 27 b^4 + 144 a c + 16 a^4 c
\\
& - 288 a b c + 144 a b^2 c - 128 a^2 c^2 + 256 c^3=0\rbrace
\end{align*}

To take a closer look at the different types of singularities, we need to consider four sub-cases.

(a) (Unique parabolic fixed point with $k=1$: non-singular point). The treatment of this case is similar to that of Case 1(a).

(b) (Unique parabolic fixed point with $k=3$: triple point). The only parameter for which $f_{a,b,c}$ has a parabolic cycle with three petals is $(0,1,0)$. A simple yet lengthy computation shows that $(0,1,0)$ is a singular point of $\mathrm{Per}_1(1)$; in particular, it is a triple point with three coincident tangents (or a tangent of multiplicity $3$). A heuristic reasoning for this kind of singularity can be given along the same lines of Case 1(b).

(c) (Non-isolated singularities). Since $\mathrm{Per}_1(1)$ is a two-dimensional algebraic set, it is perhaps not surprising that it has non-isolated singularities along a complex one-dimensional algebraic subset. These non-isolated singular parameters are given by the intersection of the algebraic sets 
\begin{center}
$V :=\lbrace (a,b,c)\in \mathbb{C}^3: \mathrm{sRes}_0(f_{a,b,c}(z)-z,f'_{a,b,c}(z)-1)=\mathrm{sRes}_1(f_{a,b,c}(z)-z,f'_{a,b,c}(z)-1)=0\rbrace$
\end{center}
This is, in fact, the intersection of $\mathrm{Per}_1(1)$ with $\lbrace (a,b,c)\in \mathbb{C}^3: 36 + 8 a^3 - 72 b + 36 b^2 - 32 a c=0\rbrace$. Each point of $V$ is a singular point of $\mathrm{Per}_1(1)$. 

As is clear from the defining property, for each $(a,b,c)\in V$, the corresponding polynomials $f_{a,b,c}(z)-z$ and $f'_{a,b,c}(z)-1$ have at least two common factors. The case where they have three common factors is already covered in the previous case, so we will now be concerned with the case $\mathrm{gcd}(f_{a,b,c}(z)-z,f'_{a,b,c}(z)-1)=2$.
This can happen in two different ways.

i) (Two parabolic fixed points each with $k=1$). Each parameter of the form $(a,1,\frac{a^2}{4})$ ($a\in \mathbb{C}^*$) belongs to $V$, and the corresponding polynomial $f_{a,1,\frac{a^2}{4}}$ has two distinct simple parabolic fixed points $\pm\sqrt{-\frac{a}{2}}$. This case is similar to Case 1(c); the singularities are formed by the transversal intersection of two branches of the algebraic set. Hence, each $(a,1,\frac{a^2}{4})$ ($a\in \mathbb{C}^*$) is a double point, and there are two tangents at each such singularity. 

ii) (Unique parabolic fixed point with $k=2$). For each $(a,b,c)\in V\setminus \lbrace (a,1,\frac{a^2}{4}): a\in \mathbb{C}\rbrace$, $f_{a,b,c}$ has a unique parabolic fixed point with two petals. Each such parameter is a double point with a single tangent of multiplicity $2$. A heuristic reasoning for this kind of singularity can be given along the same lines of Case 1(b).

Let us summarize our observations regarding the singular locus of $\mathrm{Per}_1(1)$ of degree $4$ monic centered polynomials. The singular locus $V$ admits a natural stratification $V = V_0\cup V_1\cup V_2$, where
\begin{eqnarray*}
V_0 &=& \lbrace (0,1,0)\rbrace ,\\
V_1 &=& \lbrace (a,b,c)\in \mathbb{C}^3: b=1,\ a^2-4c=0\rbrace \setminus \lbrace (0,1,0)\rbrace ,\\
V_2 &=& \lbrace (a,b,c)\in \mathbb{C}^3: 8a^3+27(1-b)^2=0,\ a^2+12c=0\rbrace \setminus \lbrace (0,1,0)\rbrace .
\end{eqnarray*}

Here, $(0,1,0)$ is a triple point of $\mathrm{Per}_1(1)$ with a tangent of multiplicity $3$, each point of $V_1$ is a double point with two distinct tangents, and each point of $V_2$ is a double point with a tangent of multiplicity $2$. Therefore, the Milnor fibrations at any two points of $V_i$ have the same fibration type. 

In general, we should ask the following questions, which are clearly motivated by the preceding analysis. 

\begin{question}
Let $\mathcal{F}=\lbrace f_{a_1, a_2, \cdots, a_m}\rbrace_{(a_1, a_2, \cdots, a_m) \in \mathbb{C}^m}$ be a holomorphic family of holomorphic polynomials of degree $d\geq 2$, depending algebraically on parameters $(a_1, a_2,$ $\cdots, a_m)\in \mathbb{C}^m$. 
\begin{enumerate}
\item Does every singularity of $\mathrm{Per}_n(1)$ occur either due to orbit mergers (giving rise to cusp-type singularities) or due to transverse intersection of more than one branches, each defining a periodic orbit of multiplier $1$ (giving rise to node-type singularities)? 

\item Can we classify the types of singularities of $\mathrm{Per}_n(1)$ in terms of dynamical properties of the singular parameters?

\item Study the topology of $\mathrm{Per}_n(1)$ near its singularities, especially near the non-isolated ones.
\end{enumerate}
\end{question}


\begin{thebibliography}{99}

\bibitem{B1}
 \newblock  A. F. Beardon,
     \newblock \emph{Iteration of Rational Functions},
     \newblock Complex Analytic Dynamical Systems Series: Graduate Texts in Mathematics, Vol. 132, Springer-Verlag, 1991.
     
\bibitem{BBM1}
\newblock A. Bonifant, X. Buff and J. Milnor,
\newblock Antipode preserving cubic maps: the fjord theorem, arXiv: 1512.01850.

\bibitem{BBM2}
\newblock A. Bonifant, X. Buff and J. Milnor,
\newblock Antipode preserving cubic maps II: tongues and the ring locus,
\newblock work in progress.

\bibitem[BPC]{BPC} 
\newblock S. Basu, R. Pollack and M.-F. Coste-Roy,
\newblock \emph{Algorithms in Real Algebraic Geometry},
\newblock 2nd edition, Springer, Berlin, 2006.

\bibitem{CFG}
\newblock J. Canela, N. Fagella and A. Garijo,
\newblock On a family of rational perturbations of the doubling map,
\newblock \emph{Journal of Difference Equations and Applications}, \textbf{21} (2015), 715--741.

\bibitem{BeEr}
\newblock W. Bergweiler and A. Eremenko,
\newblock Green's function and anti-holomorphic dynamics on a torus,
\newblock \emph{Proc. Amer. Math. Soc.}, \textbf{144} (2016), 2911--2922.

     
\bibitem{DU}   
\newblock M. Denker and M. Urbański, 
     \newblock Hausdorff and conformal measures on Julia sets with a rationally indifferent periodic point,
     \newblock \emph{J. London Math. Soc.}, \textbf{43(2)} (1991), 107--118.
     
\bibitem{DuFo}
 \newblock D. S. Dummit and R. M. Foote,
     \newblock \emph{Abstract Algebra},
     \newblock 3rd Edition, John Wiley and Sons, Inc., 2003.
     

\bibitem{HS}
    \newblock J. H. Hubbard and D. Schleicher,
    \newblock  Multicorns are not path connected,
    \newblock  in \emph{Frontiers in Complex Dynamics: In Celebration of John Milnor's 80th Birthday} (eds. A. Bonifant, M. Lyubich and S. Sutherland),
                Princeton University Press, (2014), 73--102.
                
\bibitem{IK}
   \newblock  H. Inou and J. Kiwi,
   \newblock Combinatorics and topology of straightening maps, I: compactness and bijectivity,
   \newblock \emph{Advances in Mathematics}, \textbf{231} (2012), 2666--2733.
                
\bibitem{IM}
\newblock H. Inou and S. Mukherjee,
\newblock Non-landing parameter rays of the multicorns,
\newblock \emph{Inventiones Mathematicae}, \textbf{204} (2016), 869--893.

\bibitem{IM1}
\newblock H. Inou and S. Mukherjee,
\newblock Discontinuity of straightening in antiholomorphic dynamics, arXiv: 1605.08061.

\bibitem{K1}
\newblock Frances Kirwan,
\newblock \emph{Complex Algebraic Curves},
\newblock Cambridge University Press, Cambridge, 1992.


\bibitem{JRL}   
\newblock J. Rivera-Letelier, 
     \newblock A connecting lemma for rational maps satisfying a no-growth condition,
     \newblock \emph{Ergodic Theory and Dynamical Systems}, \textbf{27(2)} (2007), 595--636.

\bibitem{Mc}   
\newblock C. T. McMullen, 
     \newblock Hausdorff dimension and conformal dynamics II: geometrically finite rational maps,
     \newblock \emph{Commentarii Mathematici Helvetici}, \textbf{75(4)} (2000), 535--593.
 
\bibitem{M1new}
 \newblock  J. Milnor,
     \newblock \emph{Dynamics in one complex variable},
     \newblock 3rd Edition, Princeton University Press, New Jersey, 2006.
     
\bibitem{M3}   
\newblock J. Milnor, 
     \newblock Remarks on iterated cubic maps,
     \newblock \emph{Experiment. Math.}, \textbf{1(1)} (1992), 5--24.
     
\bibitem{M5}
\newblock J. Milnor,
\newblock \emph{Singular Points of Complex Hypersurfaces},
\newblock Annals of Mathematics Studies. Princeton University Press, New Jersey, 1968.
     
\bibitem{MNS}
\newblock S. Mukherjee, S. Nakane and D. Schleicher,
\newblock On multicorns and unicorns II: bifurcations in spaces of antiholomorphic polynomials,
\newblock \emph{Ergodic Theory and Dynamical Systems}, \textbf{37} (2017), 859--899.

\bibitem{MU}
 \newblock  D. R. Mauldin and M. Urbański,
     \newblock \emph{Graph Directed Markov Systems: Geometry and Dynamics of Limit Sets},
     \newblock Cambridge University Press, Cambridge, 2003.

\bibitem{Na1}   
\newblock S. Nakane, 
     \newblock Connectedness of the tricorn,
     \newblock \emph{Ergodic Theory and Dynamical Systems}, \textbf{13} (1993), 349--356.
    
\bibitem{ND}   
\newblock N. Dobbs, 
     \newblock Nice sets and invariant densities in complex dynamics,
     \newblock \emph{Math. Proc. Cambridge Philos. Soc.}, \textbf{150(1)} (2011), 157--165.
    
\bibitem{NS}   
\newblock S. Nakane and D. Schleicher, 
     \newblock On multicorns and unicorns I: antiholomorphic dynamics, hyperbolic components, and real cubic polynomials,
     \newblock \emph{International Journal of Bifurcation and Chaos}, \textbf{13} (2003), 2825--2844.
    
\bibitem{Ru}   
\newblock D. Ruelle, 
     \newblock Repellers for real analytic maps,
     \newblock \emph{Ergodic Theory and Dynamical Systems}, \textbf{2(1)} (1982), 99--107.
    
    \bibitem{Sa}
\newblock S. Mukherjee,
\newblock Orbit portraits of unicritical antiholomorphic polynomials,
\newblock \emph{Conformal Geometry and Dynamics of the AMS}, \textbf{19} (2015), 35--50.

\bibitem{SU}   
\newblock B. Skorulski and M. Urbański, 
     \newblock Finer fractal geometry for analytic families of conformal dynamical systems,
     \newblock \emph{Dynamical Systems}, \textbf{29} (2014), 369--398.
    
    
\bibitem{U}   
\newblock M. Urbański, 
     \newblock Measures and dimensions in conformal dynamics,
     \newblock \emph{Bull. Amer. Math. Soc.}, \textbf{40} (2003), 281--321.
    
\bibitem{W}   
\newblock P. Walters,
     \newblock A variational principle for the pressure of continuous transformations,
     \newblock \emph{Amer. J. Math.}, \textbf{97} (1979), 937--971.
    
\bibitem{W1}
 \newblock  P. Walters,
     \newblock \emph{An introduction to ergodic theory},
     \newblock Graduate Texts in Mathematics, Volume 79, Springer, 1982.
     
\bibitem{W2} 
\newblock C. T. C. Wall,
\newblock \emph{Singular Points of Plane Curves},
\newblock London Mathematical Society Student Texts (vol. 63), Cambridge University Press, 2004.    

\bibitem{Zi}
 \newblock  M. Zinsmeister,
     \newblock \emph{Thermodynamic Formalism and Holomorphic Dynamical Systems},
     \newblock SMF/AMS Texts and Monographs, Volume 2, 2000.

\end{thebibliography}
\end{document}